\newcommand{\R}{{\mat R}}
\newcommand{\ds}{\displaystyle}
\newcommand{\be}{\begin{eqnarray}}
\newcommand{\ben}{\begin{eqnarray*}}
\newcommand{\en}{\end{eqnarray}}
\newcommand{\enn}{\end{eqnarray*}}
\newcommand{\mat}{\mathbb}
\newtheorem{theorem}{Theorem}[section]
\newtheorem{lemma}[theorem]{Lemma}
\begin{document}
\renewcommand{\theequation}{\arabic{section}.\arabic{equation}}

\title{\bf Simultaneous recovery of a locally rough interface and the embedded obstacle with the reverse time migration}

\author{
Jianliang Li\thanks{School of Mathematics and Statistics, Hunan Normal University, Changsha 410081, China ({\tt lijl@amss.ac.cn})}
\and
Jiaqing Yang\thanks{School of Mathematics and Statistics, Xi'an Jiaotong University,
Xi'an, Shaanxi 710049, China ({\tt jiaq.yang@mail.xjtu.edu.cn})}
}
\date{}

\maketitle

\begin{abstract}
Consider the inverse acoustic scattering of time-harmonic point sources by an unbounded locally rough interface 
with bounded obstacles embedded in the lower half-space. A novel version of reverse time migration is proposed 
to reconstruct both the locally rough interface and the embedded obstacle. By a modified Helmholtz-Kirchhoff identity
associated with a planar interface, we obtain a modified imaging functional which has been shown that it always 
peaks on the local perturbation of the interface and on the embedded obstacle. Numerical examples are presented 
to demonstrate the effectiveness of the method.
\vspace{.2in}

{\bf Keywords}: inverse acoustic scattering, locally rough interfaces, embedded obstacles, reverse time migration.

\end{abstract}

\maketitle

\section{Introduction}
This paper concerns the two-dimensional acoustic scattering of time-harmonic point sources by a locally rough interface with 
an embedded obstacle in the lower half-space. Given the incident wave, the direct scattering problem is to determine the distribution
of the scattered wave; while the inverse scattering problem aims to recover the locally rough interface as well as the embedded
obstacle from the measured scattered wave in a certain domain. These problems was motivated by significant applications in 
diverse scientific areas such as medical imaging \cite{SA99} and exploration geophysics \cite{BA84}.

The main difficulty of the rough surface scattering problem is the unboundedness of the rough surface, which makes that the related 
integral operators are non-compact and so the classical Fredholm alternative is not available. Based on a generalized Fredholm theory
\cite{SZ97, SZ00}, the well-posedness of the direct scattering problem by rough surfaces has been established in \cite{SP05, SRZ99, SZ99,DTS03, ZS03} by the integral equation method. In addition, we refer to \cite{SE10, LWZ19, QZZ19, WW87, MT06, ZZ13} for the well-posedness of the direct rough surface scattering problem. Different from all previous works, if the rough surface is a local perturbation of a planar 
surface, by introducing a special locally rough surface, the scattering problem can be transformed into an equivalent integral equation 
defined in a bounded domain, for which the well-posedness follows from the classical Fredholm alternative, for details we refer to \cite{DLLY17, LYZ21} for the scattering by the locally rough surface and \cite{LYZ22, YLZ22} for the scattering by the locally rough interface with an embedded obstacle. For inverse problems, the reference \cite{YLZ22} has established a global uniqueness which shows that the locally rough interface, the embedded obstacle and the wave number in the lower half-space can be uniquely determined by means of near-field measurements above the interface. Based on this uniqueness, a modified linear sampling method has been developed in \cite{LYZ22} to solve the inverse problem of simultaneously reconstructing the locally rough interface and the embedded obstacle. However, it is worth pointing out that the linear sampling method in \cite{LYZ22} is sensitive to the noise. For the inverse problem, there exists a variety of numerical algorithms. 
For the inverse scattering by a planar surface with buried objects,
we refer to the MUSIC-type scheme \cite{AIL05}, the asymptotic factorization method \cite{RG08}, the sampling method \cite{GHKMS05}, and the direct imaging algorithm \cite{LLLL15, LYZZ21}. For the inverse scattering by rough surfaces, we refer to iterative algorithms \cite{GJ11,GJ13,CR10,CG11}, the algorithm based on the transformed field expansion \cite{GL13, GL14}, the factorization method \cite{AL08}, the singular source method \cite{C03}, the direct imaging method \cite{LZZ18,LZZ19}, and linear sampling methods \cite{DLLY17,LYZ21,ZY22}.

The reverse time migration (RTM) method is a popular sample-type method which has been extensively applied in seismic imaging \cite{BCS01} and exploration geophysics \cite{BA84,BCS01,JFC85}. This method first back-propagates the complex conjugated 
data into the background medium and then computes the cross-correlation between the back-propagated field and the incident field
to output the imaging indicator, which has different behaviors when the sampling point is near the scatterer and far away from the scatterer. Based on this property, it is efficient, stable, and robust to noise. The key point of the RTM method is to establish the related 
Helmholtz-Kirchhoff identity, which plays a crucial role in the analysis of the indicator. For the inverse obstacle scattering problem, the 
mathematical justification of the RTM method has been proved rigorously in \cite{CCH131}, which is based on an usual Helmholtz-Kirchhoff identity associated with the fundamental solution of the Helmholtz operator in the free space. The results in \cite{CCH131} has
been extended to \cite{CCH132,CH151,CH152,CH153,CH16,CH17, L21} to solve some other inverse obstacle scattering problems.
However, there are few results for the RTM method to recover unbounded rough surfaces since the usual Helmholtz-Kirchhoff identity
in \cite{CCH131} is not valid. To overcome this difficulty, we has established a modified Helmholtz-Kirchhoff identity associated with a background Green function in a two-layered medium separated by a special
locally rough surface and then extended the RTM method to reconstruct the locally rough surface without embedded obstacles, 
see \cite{LY22} for details. 

Unfortunately, if there is a obstacle embedded in the lower half-space separated by a locally rough interface, both the usual Helmholtz-Kirchhoff identity in \cite{CCH131} and the modified Helmholtz-Kirchhoff identity in \cite{LY22} are not valid to simultaneously reconstruct the locally rough interface and the embedded obstacle. Notice that the rough interface is locally perturbed, we are able to introduce a planar interface and then transform the scattering problem by the locally rough interface and the embedded obstacle into the scattering problem by several inhomogeneous mediums and the embedded obstacle. Based on this observation, we can establish the corresponding Helmholtz-Kirchhoff identity associated with the background Green function in a two-layered medium separated by the planar interface. Thus, the mathematical justification of the RTM method has been proved rigorously, where we illustrate that the corresponding imaging indicator always peaks on the local perturbation of the interface and on the embedded obstacle, which is confirmed in the numerical examples.

The outline of this paper is as follows. In section 2, we introduce the mathematical model for the forward scattering problem, and present a novel version of the RTM method based on a modified Helmholtz-Kirchhoff identity. In section 3, numerical examples are reported to illustrate the effectiveness of the proposed method. The paper is concluded with some general remarks and discussions on the future research in section 4.

\section{The RTM method}
In this section, we shall introduce the mathematical model of the scattering problem by an unbounded, locally rough interface and an 
obstacle in the lower half-space, and investigate the RTM method for this model.

Let the scattering interface be described by a curve 
\begin{eqnarray}\label{a1}
\Gamma:=\{(x_1,x_2)\in\mathbb R^2: x_2=f(x_1)\}
\end{eqnarray}
where $f$ is assumed to be a Lipschitz continuous function with compact support. It means that the interface $\Gamma$ is a local 
perturbation of the planar interface 
\begin{eqnarray*}\label{a2}
\Gamma_0:=\{(x_1,x_2)\in\mathbb R^2: x_2=0\}.
\end{eqnarray*}
The whole space is separated by $\Gamma$ into two half-spaces denoted by
\begin{eqnarray*}\label{a3}
\Omega_1:=\{(x_1,x_2)\in\mathbb R^2: x_2>f(x_1)\}\quad{\rm and}\quad \Omega_2:=\{(x_1,x_2)\in\mathbb R^2: x_2<f(x_1)\}.
\end{eqnarray*}
In the lower half-space $\Omega_2$, we assume that there is an bounded obstacle $D$ with a smooth boundary $\partial D\in C^{2,\alpha}$ for some H\"{o}lder exponent $0<\alpha\leq 1$. In this paper, for simplicity, $D$ is assumed to be sound-soft which 
means that a Dirichlet boundary condition is imposed on $\partial D$. 

Consider the incident field $u^i(x,x_s)$ to be generated by a point source 
\begin{eqnarray}\label{a4}
u^i(x,x_s)=\Phi_{\kappa}(x,x_s):=\frac{\rm i}{4}H_0^{(1)}(\kappa|x-x_s|)\quad {\rm for}\;\; x_s\in\R^2\setminus(\overline{D}\cup\Gamma).
\end{eqnarray}
Here, $H_0^{(1)}$ is the Hankel function of the first kind of order zero, $\kappa$ is the wave number which is a piecewise constant 
given by $\kappa=\kappa_1$ for $x_s\in\Omega_1$ and $\kappa=\kappa_2$ for $x_s\in\Omega_2\setminus\overline{D}$, and for some 
constant $\lambda$, $\Phi_{\lambda}$ is the fundamental solution of the Helmholtz equation satisfying $\Delta\Phi_{\lambda}(x, x_s)+\lambda^2\Phi_{\lambda}(x, x_s)=-\delta_{x_s}(x)$ in $\mathbb R^2$ in the distributional sense, where $\delta_{x_s}(x):=\delta(x-x_s)$ is 
the Kronecker delta distribution. Then the scattering of $u^i$ by the scatterer $(\Gamma, D)$ can be reduced to the problem of seeking 
$u=u(x,x_s)$ satisfying that 
\begin{equation}\label{a5}
\left\{\begin{array}{lll}
         \Delta u+\kappa^2u=-\delta_{x_s} \qquad\qquad&\textrm{in}\;\; \R^2\setminus\overline{D}, \\[2mm]
         u=0\qquad\qquad&\textrm{on}\;\; \partial D,\\[2mm]
          \lim\limits_{|x|\rightarrow \infty}|x|^{\frac{1}{2}}\left(\partial_{|x|} u^s-{\rm i}\kappa u^s\right)=0,
       \end{array}
\right.
\end{equation}
in the distributional sense. Here, $u(x,x_s)$ and $u^s(x,x_s)$ are the total field and the scattered field, respectively, which are related by
\begin{equation*}\label{a6}
u(x,x_s)=\left\{\begin{array}{ll}
\Phi_{\kappa_1}(x,x_s)+u^s(x,x_s)\qquad&\textrm{in}\;\;\Omega_1,\\[2mm]
u^s(x,x_s)\qquad\qquad&\textrm{in}\;\;\Omega_2\setminus\overline{D},
\end{array}
\right.
\end{equation*}
for $x_s\in\Omega_1$, and 
\begin{equation*}\label{a7}
u(x,x_s)=\left\{\begin{array}{ll}
u^s(x,x_s)\qquad&\textrm{in}\;\;\Omega_1,\\[2mm]
\Phi_{\kappa_2}(x,x_s)+u^s(x,x_s)\qquad&\textrm{in}\;\;\Omega_2\setminus\overline{D},
\end{array}
\right.
\end{equation*}
for $x_s\in\Omega_2\setminus\overline{D}$, the wave number $\kappa=\kappa(x)$ is a piecewise constant defined by
\begin{equation*}\label{a8}
\kappa(x)=\left\{\begin{array}{ll}
\kappa_1\qquad&\textrm{for}\;\;x\in\Omega_1,\\[2mm]
\kappa_2\qquad\qquad&\textrm{for}\;\;x\in\Omega_2\setminus\overline{D},
\end{array}
\right.
\end{equation*}
and the last condition in Problem (\ref{a5}) is the well-known Sommerfeld radiation condition which holds uniformly for all 
directions $\hat{x}:=x/|x|\in {\mathbb S}:=\{x\in\R^2: |x|=1\}$. Moreover, the Sommerfeld radiation condition allows that the 
scattered field $u^s$ has the following asymptotic behavior 
\begin{eqnarray*}\label{a9}
u^s(x,x_s)=\frac{e^{{\rm i}\kappa |x|}}{|x|^{\frac{1}{2}}}\left\{u^{\infty}(\hat{x},x_s)+O\left(\frac{1}{|x|}\right)\right\}\quad {\rm for}\; |x|\to\infty
\end{eqnarray*}
uniformly in all direction $\hat{x}\in{\mathbb S}$, where $u^{\infty}(\hat{x}, x_s)$ is known as the far-field pattern of $u^s(x,x_s)$. 
It is shown by Theorem 3.4 in \cite{YLZ22} that Problem (\ref{a5}) is well-posed.

\begin{figure}[htbp]
\centering
\includegraphics[width=5in, height=3in]{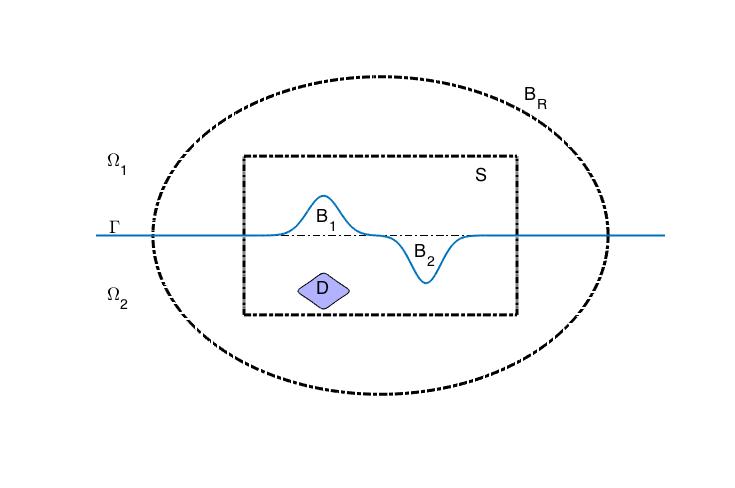}
\caption{The setting of RTM method for reconstruction of $(\Gamma, D)$.}
\label{f1} 
\end{figure}

As shown in Figure \ref{f1}, we first introduce some notations used in the RTM approach. Let $\Gamma$ be the locally rough interface 
given by (\ref{a1}), whose local perturbations are denoted by $B_1:=\R^2_+\cap\Omega_2$ and $B_2:=\R^2_-\cap\Omega_1$ with 
$\R^2_{\pm}:=\{x\in\R^2: x_2\gtrless 0\}$. For simplicity, in this paper we consider a simple case that $\Gamma$ has only two local perturbations as seen in Figure \ref{f1}. The results obtained in this paper is also valid for the case of multiple local perturbations. For 
$j=1,2$, let $\chi_j(x)$ denote the characterization function of the domain $B_j$, defined by $\chi_j(x)=1$ in $B_j$ and $\chi_j(x)=0$ outside of $B_j$. Throughout, denoted by $B:=B_1\cup B_2$ and $\chi(x):=\chi_1(x)-\chi_2(x)$. Let $D$ be the obstacle embedded in $\Omega_2$ and we assume a priori that $D\cap B=\emptyset$. Since the RTM method is a sample-type method, we choose a rectangle 
sampling domain $S$ such that $B\subset S$ and $D\subset S$. And we choose a sufficiently large $R$ such that $S\subset B_R$, where $B_R$ stands for the disc with the origin as the center and $R$ as the radius. Assume that there are $N_s$ point sources $x_s$ uniformly distributed on $\Gamma_s:=\partial B_R$ and $N_r$ receivers $x_r$ uniformly distributed on $\Gamma_r:=\partial B_R$.

To introduce the RTM method, we first consider the scattering of the incident point source $u^i(x,x_s)$ given by (\ref{a4}) by the planar 
interface $\Gamma_0$, which reads 
\begin{equation}\label{a10}
\left\{\begin{array}{ll}
         \Delta G(x,x_s)+\kappa_0^2(x)G(x,x_s)=-\delta_{x_s}(x)  \qquad\qquad&\textrm{in}\;\; \R^2, \\[2mm]
          \lim\limits_{|x|\rightarrow \infty}|x|^{\frac{1}{2}}\left(\partial_{|x|} G(x,x_s)-{\rm i}\kappa_0(x) G(x,x_s)\right)=0,
       \end{array}
\right.
\end{equation}
in the distributional sense with the Sommerfeld radiation condition uniformly for all $\hat{x}\in {\mathbb S}$. Here, the wave number
$\kappa_0(x)$ is defined by $\kappa_0(x)=\kappa_1$ in $\R^2_+$ and $\kappa_0(x)=\kappa_2$ in $\R^2_-$. We refer to \cite{L10, YLZ22} for the explicit expression of $G(x,x_s)$.

For $x_s\in\Gamma_s$, we define 
\begin{equation}\label{a11}
V(x,x_s):=u(x,x_s)-G(x,x_s),
\end{equation}
then it follows from (\ref{a5}) and (\ref{a10}) that $V(x,x_s)$ solves 
\begin{equation}\label{a12}
\left\{\begin{array}{lll}
         \Delta V(x,x_s)+\kappa^2(x)V(x,x_s)=\beta\chi(x)G(x,x_s)\qquad\qquad&\textrm{in}\;\; \R^2\setminus\overline{D}, \\[2mm]
         V(x,x_s)=-G(x,x_s)\qquad\qquad&\textrm{on}\;\; \partial D,\\[2mm]
          \lim\limits_{|x|\rightarrow \infty}|x|^{\frac{1}{2}}\left(\partial_{|x|} V(x,x_s)-{\rm i}\kappa(x) V(x,x_s)\right)=0,
       \end{array}
\right.
\end{equation}
where $\beta:=\kappa_1^2-\kappa_2^2$. We refer to Theorem 3.1 in \cite{LYZ22} for the unique solvability of Problem (\ref{a12}).

Since we can obtain $G(x_r,x_s)$ by solving Problem (\ref{a10}) through the Nystr\"{o}m method or the finite element method,
we can obtain the data $V(x_r,x_s)$ from the measurement $u(x_r,x_s)$ and (\ref{a11}). As mentioned in the introduction, we first
back-propagate the complex conjugated data $\overline{V(x_r,x_s)}$ into the domain $\R^2$, and then define the indicator as the 
imaginary part of the cross-correlation of $G(\cdot,x_s)$ and the back-propagation field. More precisely, we summarize it in the following 
algorithm.

{\bf Algorithm 1 (RTM for locally rough interfaces and embedded obstacles)}: Given the data $V(x_r,x_s)$ for $r=1,2,...,N_r$ and $s=1,2,...,N_s$.
\begin{itemize}
\item Back-propagation: for $s=1,2,...,N_s$, solve the problem 
\begin{eqnarray*}\label{a13}
\left\{\begin{aligned}
         &\Delta W(x,x_s)+\kappa_0^2(x)W(x,x_s)=\frac{|\Gamma_r|}{N_r}\sum_{r=1}^{N_r}\overline{V(x_r,x_s)}\delta_{x_r}(x) \qquad\textrm{in}\;\; \R^2, \\
        &\; \ds\lim_{|x|\rightarrow \infty}|x|^{\frac{1}{2}}\left(\partial_{|x|} W(x,x_s)-{\rm i}\kappa_0(x)  W(x,x_s)\right)=0,
       \end{aligned}
\right.
\end{eqnarray*}
to obtain the solution $W(x,x_s)$.
\item Cross-correlation: for each sampling point $z\in S$, compute the indicator function 
\begin{eqnarray*}\label{a14}
{\rm Ind}(z)=\kappa(x_r){\rm Im}\left\{\frac{|\Gamma_s|}{N_s}\sum_{s=1}^{N_s}\kappa(x_s)G(z,x_s)W(z,x_s)\right\}
\end{eqnarray*}
and then plot the mapping ${\rm Ind}(z)$ against $z$.
\end{itemize}

From Problem (\ref{a10}) and the linearity, we immediately see that 
\begin{eqnarray*}\label{a15}
W(x,x_s)=-\frac{|\Gamma_r|}{N_r}\sum_{r=1}^{N_r}G(x,x_r)\overline{V(x_r,x_s)},
\end{eqnarray*}
which implies that 
\begin{eqnarray}\label{a16}
{\rm Ind}(z)=-{\rm Im}\left\{\frac{|\Gamma_s|}{N_s}\frac{|\Gamma_r|}{N_r}\sum_{s=1}^{N_s}\sum_{r=1}^{N_r}\kappa(x_r)\kappa(x_s)G(z,x_s)G(z,x_r)\overline{V(x_r,x_s)}\right\}\quad z\in S.
\end{eqnarray}
Observing that $G(z,x_s)$, $G(z,x_r)$ and $V(x_r, x_s)$ are continuous for $z\in S$, $x_r\in\Gamma_r$, $x_s\in\Gamma_s$, it follows
from the trapezoid quadrature formula that ${\rm Ind}(z)$ given by (\ref{a16}) is a discrete formula of the following continuous function
\begin{eqnarray}\label{a17}
\widetilde{{\rm Ind}}(z)=-{\rm Im}\int_{\Gamma_r}\int_{\Gamma_s}\kappa(x_r)\kappa(x_s)G(z,x_s)G(z,x_r)\overline{V(x_r,x_s)}{\rm d}s(x_s){\rm d}s(x_r),\quad z\in S.
\end{eqnarray}

In the remaining part of this section, we restrict ourselves to demonstrate that the indicator $\widetilde{{\rm Ind}}(z)$ enjoys the nice 
feature that it always peaks on the local perturbation $B$ and on the embedded obstacle $D$. To this end, we first introduce the following modified Helmholtz-Kirchhoff identity associated with the Green function $G$.
\begin{lemma}\label{lem1}
Let $G$ be the background Green's function defined by (\ref{a10}). Then for any $x,z\in B_R\setminus\Gamma_0$, we have 
\begin{eqnarray}\label{a18}
\int_{\partial B_R}\left(\overline{G(\xi,x)}\frac{\partial G(\xi,z)}{\partial \nu(\xi)}-\frac{\partial \overline{G(\xi,x)}}{\partial\nu(\xi)}G(\xi,z)\right){\rm d}s(\xi)=2{\rm i}{\rm Im}G(x,z).
\end{eqnarray}
\end{lemma}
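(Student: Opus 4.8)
The plan is to run the classical Helmholtz--Kirchhoff argument, adapted to the two--layered background, using only three facts: that $\kappa_0$ is real--valued, that $G(\cdot,x)$ and $G(\cdot,z)$ satisfy the transmission conditions across $\Gamma_0$ (continuity of the trace and of the normal derivative, as is built into the construction of $G$ in \cite{L10,YLZ22}), and the reciprocity relation $G(x,z)=G(z,x)$. Since $\kappa_0$ is real, $\overline{G(\cdot,x)}$ also solves $\Delta w+\kappa_0^2 w=-\delta_x$ in $\R^2$, so both $\overline{G(\cdot,x)}$ and $G(\cdot,z)$ are Helmholtz solutions with the \emph{same} piecewise--constant wave number $\kappa_0$ on $B_R\setminus(\Gamma_0\cup\{x\}\cup\{z\})$. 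It suffices to treat $x\neq z$; the diagonal case follows afterwards by continuity, since the singularity of $G$ is logarithmic and hence ${\rm Im}\,G$ remains bounded. Note also that, because we integrate over the bounded circle $\partial B_R$, the Sommerfeld radiation condition is not needed for (\ref{a18}) itself.

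First I would fix $\varepsilon>0$ so small that $\overline{B_\varepsilon(x)}$ and $\overline{B_\varepsilon(z)}$ are disjoint and contained in $B_R\setminus\Gamma_0$, set $\Omega_\varepsilon:=B_R\setminus(\overline{B_\varepsilon(x)}\cup\overline{B_\varepsilon(z)})$, and split it by the planar interface into $\Omega_\varepsilon^{\pm}:=\Omega_\varepsilon\cap\R^2_{\pm}$. On each piece the wave number is a single constant ($\kappa_1$ on $\Omega_\varepsilon^{+}$, $\kappa_2$ on $\Omega_\varepsilon^{-}$), so the Green's--identity integrand $\overline{G(\cdot,x)}\Delta G(\cdot,z)-G(\cdot,z)\Delta\overline{G(\cdot,x)}$ vanishes identically there (both factors solve the same Helmholtz equation). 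Adding Green's second identity over $\Omega_\varepsilon^{+}$ and over $\Omega_\varepsilon^{-}$, the two contributions along $\Gamma_0\cap B_R$ occur with opposite outward normals but with identical Cauchy data of $\overline{G(\cdot,x)}$ and of $G(\cdot,z)$, by the transmission conditions, and therefore cancel. What remains is
\[
0=\int_{\partial B_R}\Big(\overline{G(\xi,x)}\,\partial_{\nu}G(\xi,z)-\partial_{\nu}\overline{G(\xi,x)}\,G(\xi,z)\Big)\,ds(\xi)+I_\varepsilon(x)+I_\varepsilon(z),
\]
where $I_\varepsilon(x)$ and $I_\varepsilon(z)$ are the corresponding integrals over $\partial B_\varepsilon(x)$ and $\partial B_\varepsilon(z)$ with unit normals pointing towards $x$ and $z$, respectively.

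Next I would let $\varepsilon\to0$. Writing $G(y,x)=-\tfrac{1}{2\pi}\log|y-x|+g_x(y)$ with $g_x$ continuous near $x$ (the leading singularity of $\Phi_{\kappa_j}$ being real), on $\partial B_\varepsilon(x)$ one has $\partial_{\nu}\overline{G(y,x)}=\tfrac{1}{2\pi\varepsilon}+O(1)$ and $\overline{G(y,x)}=O(|\log\varepsilon|)$; since $G(\cdot,z)$ and $\partial_{\nu}G(\cdot,z)$ are bounded near $x$, the term $\int_{\partial B_\varepsilon(x)}\overline{G(\cdot,x)}\,\partial_{\nu}G(\cdot,z)\,ds=O(\varepsilon|\log\varepsilon|)\to0$, while $-\int_{\partial B_\varepsilon(x)}\partial_{\nu}\overline{G(\cdot,x)}\,G(\cdot,z)\,ds\to-G(x,z)$ by continuity of $G(\cdot,z)$ at $x$. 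Exchanging the roles of the singular and regular factors gives $I_\varepsilon(z)\to\overline{G(z,x)}$ in the same way. Hence
\[
\int_{\partial B_R}\Big(\overline{G(\xi,x)}\,\partial_{\nu}G(\xi,z)-\partial_{\nu}\overline{G(\xi,x)}\,G(\xi,z)\Big)\,ds(\xi)=G(x,z)-\overline{G(z,x)}.
\]
Invoking the reciprocity $G(x,z)=G(z,x)$ (which follows from Green's second identity together with the radiation condition at infinity, or directly from the explicit form of $G$ in \cite{L10,YLZ22}), the right--hand side equals $G(x,z)-\overline{G(x,z)}=2{\rm i}\,{\rm Im}\,G(x,z)$, which is (\ref{a18}). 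Finally, letting $z\to x$ and using that both sides are continuous in $z$ at $z=x$ extends the identity to the diagonal.

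The main obstacle, and essentially the only place the two--layered structure enters, is the cancellation of the contributions along $\Gamma_0\cap B_R$: this requires knowing exactly which transmission conditions $G(\cdot,x)$ and $G(\cdot,z)$ satisfy on $\Gamma_0$ and enough one--sided regularity of $G$ up to $\Gamma_0$ to legitimately apply Green's identity on the Lipschitz subdomains $\Omega_\varepsilon^{\pm}$, both of which are read off from the explicit representation of $G$. The second non--routine ingredient is the reciprocity of $G$; apart from these, the computation is the standard Helmholtz--Kirchhoff bookkeeping.
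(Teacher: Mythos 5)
Your proposal is correct and follows essentially the same route as the paper's proof: excise small discs about $x$ and $z$, apply Green's second identity on the two half-disc regions so that the contributions along $\Gamma_0$ cancel by the continuity of $G$ and its normal derivative, compute the small-circle limits $-G(x,z)$ and $\overline{G(z,x)}$ from the logarithmic singularity, and conclude with the reciprocity $G(x,z)=G(z,x)$. Your extra remarks (treating $x\neq z$ first and passing to the diagonal by continuity, and noting that the radiation condition enters only through reciprocity) are correct refinements but do not change the argument.
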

\begin{proof}
For any $x,z\in B_R\setminus\Gamma_0$, we choose a sufficiently small $\varepsilon>0$ such that the circles $B_{\varepsilon}(x)$, $B_{\varepsilon}(z)$ with $x,z$ as the center and $\varepsilon$ as the radius contain in the domain $B_R$ and $B_{\varepsilon}(x)\cap\Gamma_0=\emptyset$, $B_{\varepsilon}(z)\cap\Gamma_0=\emptyset$. Note that $G$ and its normal derivative are continuous across 
the interface $\Gamma_0$, thus a direct application of the Green theorem to $\overline{G(\cdot,x)}$ and $G(\cdot,z)$ in the domain $(B_R^+\cup B_R^-)\setminus(B_{\varepsilon}(x)\cup B_{\varepsilon}(z))$ yields 
\begin{eqnarray*}\nonumber
0&=&\int_{(B_R^+\cup B_R^-)\setminus(B_{\varepsilon}(x)\cup B_{\varepsilon}(z))}\left(\overline{G(\xi,x)}\Delta G(\xi,z)-\Delta\overline{G(\xi,x)}G(\xi,z)\right){\rm d}\xi\\\nonumber
&=&\int_{\partial B_R\cup\partial B_{\varepsilon}(x)\cup\partial B_{\varepsilon}(z)}\left(\overline{G(\xi,x)}\frac{\partial G(\xi,z)}{\partial \nu(\xi)}-\frac{\partial\overline{G(\xi,x)}}{\partial \nu(\xi)}G(\xi,z)\right){\rm d}s(\xi)\\\label{a19}
&:=&I_1+I_2+I_3
\end{eqnarray*}
where $B_R^+:=B_R\cap\R^2_+$, $B_R^-:=B_R\cap\R^2_-$, $\nu(\xi)$ denotes the unit outward normal to $\partial B_R$ when $\xi\in\partial B_R$, $\nu(\xi)$ denotes the unit normal to $\partial B_{\varepsilon}(y)$ into the interior of $B_{\varepsilon}(y)$ for $y\in\{x,z\}$. For the case $x\in\R^2_+$, notice that $G(\xi,x):=\Phi_{\kappa_1}(\xi,x)+G^s(\xi,x)$, we have 
\begin{eqnarray*}\nonumber
I_2 &=& \int_{\partial B_{\varepsilon}(x)}\left(\overline{\Phi_{\kappa_1}(\xi,x)}\frac{\partial G(\xi,z)}{\partial \nu(\xi)}-\frac{\partial\overline{\Phi_{\kappa_1}(\xi,x)}}{\partial \nu(\xi)}G(\xi,z)\right){\rm d}s(\xi)\\\nonumber
&&+\int_{\partial B_{\varepsilon}(x)}\left(\overline{G^s(\xi,x)}\frac{\partial G(\xi,z)}{\partial \nu(\xi)}-\frac{\partial\overline{G^s(\xi,x)}}{\partial \nu(\xi)}G(\xi,z)\right){\rm d}s(\xi)\\\label{a20}
&\to&-G(x,z),\quad{\rm as}\;\;\varepsilon\to 0.
\end{eqnarray*}
For the case $x\in\R^2_-$, by a similar argument, we obtain $I_2\to -G(x,z)$ as $\varepsilon\to 0$. Similarly, we have
\begin{eqnarray*}\label{a21}
\lim_{\varepsilon\to 0}I_3=\overline{G(z,x)}.
\end{eqnarray*}
Thus, we obtain
\begin{eqnarray*}
\int_{\partial B_R}\left(\overline{G(\xi,x)}\frac{\partial G(\xi,z)}{\partial \nu(\xi)}-\frac{\partial\overline{G(\xi,x)}}{\partial \nu(\xi)}G(\xi,z)\right){\rm d}s(\xi)=G(x,z)-\overline{G(z,x)}=2{\rm i}{\rm Im} G(x,z).
\end{eqnarray*}
Here, we use the reciprocity $G(x,z)=G(z,x)$ for $x,z\in B_R\setminus\Gamma_0$, which can be proven by a direct application of the 
Green theorem and the Sommerfeld radiation condition. The proof is completed.
\end{proof}
 
With the aid of the modified Helmholtz-Kirchhoff identity (\ref{a18}), we can obtain the following lemma which plays 
an important role in the analysis of $\widetilde{{\rm Ind}}(z)$.
\begin{lemma}\label{lem2}
For any $x,z\in S$, we have
\begin{eqnarray*}\label{a22}
\int_{\partial B_R}\kappa(\xi)\overline{G(x,\xi)}G(\xi,z){\rm d}s(\xi)={\rm Im}G(x,z)+\zeta(x,z)
\end{eqnarray*}
where $\zeta(x,z)$ satisfies
\begin{eqnarray*}\label{a23}
|\zeta(x,z)|+|\nabla_x\zeta(x,z)|\leq CR^{-1}
\end{eqnarray*}
uniformly for any $x,z\in S$.
\end{lemma}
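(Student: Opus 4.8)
The plan is to feed the quantitative form of the Sommerfeld radiation condition for the background Green's function $G$ into the modified Helmholtz--Kirchhoff identity (\ref{a18}). First I would use the reciprocity $G(x,\xi)=G(\xi,x)$ established in the proof of Lemma \ref{lem1} to rewrite the left-hand side of the claimed identity as $\int_{\partial B_R}\kappa(\xi)\overline{G(\xi,x)}\,G(\xi,z)\,{\rm d}s(\xi)$, noting that on $\partial B_R$ we have $\kappa=\kappa_0$, so that (\ref{a10}) applies there with the wave number $\kappa(\xi)$. On $\partial B_R$ the outward normal $\nu(\xi)$ is the radial direction, and from the explicit expression of $G$ recalled from \cite{L10,YLZ22} one gets, uniformly for $z\in S$,
$$\frac{\partial G(\xi,z)}{\partial\nu(\xi)}={\rm i}\kappa(\xi)G(\xi,z)+r(\xi,z),\qquad |r(\xi,z)|\le CR^{-3/2}.$$
Since $\kappa(\xi)$ is real, taking complex conjugates gives $\partial\overline{G(\xi,x)}/\partial\nu(\xi)=-{\rm i}\kappa(\xi)\overline{G(\xi,x)}+\overline{r(\xi,x)}$. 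Substituting these two relations into (\ref{a18}) and cancelling, the principal parts combine into $2{\rm i}\int_{\partial B_R}\kappa(\xi)\overline{G(\xi,x)}G(\xi,z)\,{\rm d}s(\xi)$, while the right-hand side is $2{\rm i}\,{\rm Im}\,G(x,z)$; dividing by $2{\rm i}$ produces the asserted identity with
$$\zeta(x,z)=-\frac{1}{2{\rm i}}\int_{\partial B_R}\Big(\overline{G(\xi,x)}\,r(\xi,z)-\overline{r(\xi,x)}\,G(\xi,z)\Big)\,{\rm d}s(\xi).$$

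It then remains to estimate $\zeta$ and $\nabla_x\zeta$. Here I would invoke the far-field decay of $G$: because $\overline{S}\subset B_R$, the source point $x$ keeps a fixed positive distance from $\partial B_R$, and the explicit two-layered Green's function -- together with the smoothness of its far-field pattern in the source variable on compact subsets of $B_R\setminus\Gamma_0$ and its continuity across $\Gamma_0$ -- yields
$$|G(\xi,x)|+|\nabla_x G(\xi,x)|\le CR^{-1/2},\qquad |\nabla_x r(\xi,x)|\le CR^{-3/2}$$
uniformly for $\xi\in\partial B_R$ and $x\in S$. Differentiating $\zeta$ under the integral sign in $x$ and inserting these bounds, the integrands of $\zeta$ and of $\nabla_x\zeta$ are both $O(R^{-1/2}\cdot R^{-3/2})=O(R^{-2})$; since $|\partial B_R|=2\pi R$, both integrals are $O(R^{-1})$, which is exactly the claimed estimate.

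The substitution and the triangle-inequality bookkeeping are routine. The step I expect to be the main obstacle is the uniform, quantitative control of $G$ on $\partial B_R$: that the radiation remainder $r(\xi,z)=\partial_{\nu(\xi)}G(\xi,z)-{\rm i}\kappa(\xi)G(\xi,z)$ decays like $R^{-3/2}$ rather than merely like $o(R^{-1/2})$, and that $|G(\xi,x)|$ and $|\nabla_x G(\xi,x)|$ decay like $R^{-1/2}$, all uniformly in the direction $\hat\xi$ -- including near the two points where $\partial B_R$ meets $\Gamma_0$ and taking into account possible lateral-wave contributions near the interface direction. Such estimates are not formal consequences of the abstract radiation condition in (\ref{a10}) alone; they have to be read off from the explicit representation of $G$ in \cite{L10,YLZ22} (analogous decay estimates for layered-medium Green's functions underlie the related analysis in \cite{LY22}).
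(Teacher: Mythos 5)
Your proposal is correct and follows essentially the same route as the paper: substitute $\partial_{\nu}G = {\rm i}\kappa G + (\text{radiation remainder})$ into the Helmholtz--Kirchhoff identity of Lemma \ref{lem1}, identify $\zeta$ as the resulting boundary integral of remainder terms, and bound it (and its $x$-gradient) using $G=O(R^{-1/2})$, remainder $=O(R^{-3/2})$, and $|\partial B_R|=2\pi R$; your explicit $\zeta$ coincides with the paper's. The only difference is presentational: you flag that the uniform $O(R^{-3/2})$ decay of the radiation remainder must be read off from the explicit two-layered Green's function, a point the paper simply asserts.
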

\begin{proof}
For $x,z\in S$, using Lemma \ref{lem1}, we have
\begin{eqnarray*}
2{\rm i}{\rm Im}G(x,z)&=&\int_{\partial B_R}\left(\overline{G(\xi,x)}\frac{\partial G(\xi,z)}{\partial \nu(\xi)}-\frac{\partial \overline{G(\xi,x)}}{\partial\nu(\xi)}G(\xi,z)\right){\rm d}s(\xi)\\
&=&\int_{\partial B_R}\Bigg\{\overline{G(\xi,x)}\left[\frac{\partial G(\xi,z)}{\partial \nu(\xi)}-{\rm i}\kappa(\xi) G(\xi,z)\right]{\rm d}s(\xi)\\
&&-G(\xi,z)\left[\frac{\overline{\partial G(\xi,x)}}{\partial \nu(\xi)}+{\rm i}\kappa(\xi)\overline{G(\xi,x)}\right]\Bigg\}{\rm d}s(\xi)\\
&&+2{\rm i}\int_{\partial B_R}\kappa(\xi)\overline{G(\xi,x)}G(\xi,z){\rm d}s(\xi),
\end{eqnarray*}
which implies
\begin{eqnarray*}
\int_{\partial B_R}\kappa(\xi)\overline{G(x,\xi)}G(\xi,z){\rm d}s(\xi)={\rm Im}G(x,z)+\zeta(x,z)\quad{\rm for}\;\;{\forall x,z\in S},
\end{eqnarray*}
with 
\begin{eqnarray*}
\zeta(x,z)&=&\frac{\rm i}{2}\int_{\partial B_R}\Bigg\{\overline{G(\xi,x)}\left[\frac{\partial G(\xi,z)}{\partial \nu(\xi)}-{\rm i}\kappa(\xi) G(\xi,z)\right]{\rm d}s(\xi)\\
&&\qquad\;\;\;\;\;-G(\xi,z)\left[\frac{\overline{\partial G(\xi,x)}}{\partial \nu(\xi)}+{\rm i}\kappa(\xi)\overline{G(\xi,x)}\right]\Bigg\}{\rm d}s(\xi).
\end{eqnarray*}
For $y\in\{x,z\}$, since
\begin{eqnarray*}
G(\xi,y)=O(|\xi|^{-\frac{1}{2}}),\qquad\frac{\partial G(\xi,y)}{\partial \nu(\xi)}-{\rm i}\kappa(\xi) G(\xi,y)=O(|\xi|^{-\frac{3}{2}})
\end{eqnarray*}
we conclude that 
\begin{eqnarray*}
|\zeta(x,z)|\leq CR^{-1}
\end{eqnarray*}
uniformly for $x,z\in S$. Moreover, due to 
\begin{eqnarray*}
\frac{\partial G(\xi,x)}{\partial x_j}=O(|\xi|^{-\frac{1}{2}}),\qquad\frac{\partial}{\partial x_j}\left[\frac{\partial G(\xi,x)}{\partial \nu(\xi)}-{\rm i}\kappa G(\xi,x)\right]=O(|\xi|^{-\frac{3}{2}})
\end{eqnarray*}
for $j=1,2$, we obtain 
\begin{eqnarray*}
|\nabla_x\zeta(x,z)|\leq CR^{-1}
\end{eqnarray*}
uniformly for $x,z\in S$. The proof is finished.
\end{proof}

\begin{lemma}\label{lem3}
Let $V(x,x_s)$ be the solution of Problem (\ref{a12}). Then we have
\begin{eqnarray*}\nonumber
V(x,x_s)=\int_{\partial D}\left[\frac{\partial V(\xi,x_s)}{\partial\nu(\xi)}G(\xi,x)-\frac{\partial G(\xi,x)}{\partial \nu(\xi)}V(\xi,x_s)\right]{\rm d}s(\xi)-\beta\int_B\chi(\xi)G(\xi,x)u(\xi,x_s){\rm d}\xi
\end{eqnarray*}
for $x\in\R^2\setminus(\Gamma\cup\overline{D})$, where $\nu(\xi)$ denotes the unit normal at $\xi\in\partial D$ which directs into the interior of $D$.
\end{lemma}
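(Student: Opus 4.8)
The plan is to apply Green's second identity to $V(\cdot,x_s)$, the solution of Problem (\ref{a12}), and to the background Green function $G(\cdot,x)$, i.e.\ the solution of Problem (\ref{a10}) with source point $x$, on a bounded region obtained by deleting the obstacle $D$ and a small disc $B_\varepsilon(x)$ about $x$, and then to pass to the limits $\varepsilon\to0$ and (outer radius)$\,\to\infty$. Three elementary observations make the volume integral collapse to the asserted form. First, comparing the piecewise definitions of $\kappa$ and $\kappa_0$ region by region gives the pointwise identity $\kappa^2(\xi)-\kappa_0^2(\xi)=-\beta\chi(\xi)$ on $\mathbb{R}^2\setminus\overline D$: the two wave numbers coincide on $\Omega_1\cap\mathbb{R}^2_+$ and on $(\Omega_2\setminus\overline D)\cap\mathbb{R}^2_-$, whereas on $B_1=\mathbb{R}^2_+\cap\Omega_2$ one has $\kappa_0=\kappa_1$, $\kappa=\kappa_2$, $\chi=1$, and on $B_2=\mathbb{R}^2_-\cap\Omega_1$ one has $\kappa_0=\kappa_2$, $\kappa=\kappa_1$, $\chi=-1$. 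Second, by the definition (\ref{a11}), $V(\xi,x_s)+G(\xi,x_s)=u(\xi,x_s)$. Third, $V(\cdot,x_s)$, $G(\cdot,x)$ and their normal derivatives are continuous across both $\Gamma$ and $\Gamma_0$: for $G$ this is the transmission behavior encoded in (\ref{a10}) on $\Gamma_0$ together with constant coefficients away from $\Gamma_0$; for $V=u-G$ it holds because $u$ and $G$ carry the same (homogeneous) transmission conditions across $\Gamma$, so the jumps cancel.

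Now fix $x\in\mathbb{R}^2\setminus(\Gamma\cup\overline D)$ and pick $\rho>0$ large enough that $\overline D\cup\overline B\cup\{x,x_s\}\subset B_\rho$. For small $\varepsilon>0$ I apply Green's second identity to $V(\cdot,x_s)$ and $G(\cdot,x)$ on $B_\rho\setminus(\overline D\cup\overline{B_\varepsilon(x)})$, splitting this domain along $\Gamma$ and $\Gamma_0$ into the regions where both wave numbers are constant and summing the pieces; by the third observation all contributions along the two interfaces cancel, and since the point-source singularities of $u$ and $G$ at $x_s$ cancel, $V(\cdot,x_s)$ is smooth there and no disc about $x_s$ need be removed. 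Using the equations (\ref{a12}) and (\ref{a10}), the volume integrand becomes $[(\kappa^2-\kappa_0^2)V-\beta\chi\,G(\cdot,x_s)]\,G(\cdot,x)$, which by the first two observations equals exactly $-\beta\chi\,u(\cdot,x_s)\,G(\cdot,x)$. The boundary of the domain then consists of $\partial B_\rho$, of $\partial D$ with normal pointing into $D$, and of $\partial B_\varepsilon(x)$ with normal pointing toward $x$.

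It remains to take the limits, exactly as in the proof of Lemma \ref{lem1}. Letting $\varepsilon\to0$: since $G(\cdot,x)$ has the fundamental-solution logarithmic singularity at $x$ while $V(\cdot,x_s)$ is smooth there, the $\partial B_\varepsilon(x)$ integral tends to $V(x,x_s)$, and the volume integral converges to $-\beta\int_B\chi(\xi)G(\xi,x)u(\xi,x_s)\,{\rm d}\xi$ because the logarithmic singularity is integrable in $\mathbb{R}^2$. Letting $\rho\to\infty$: on $\partial B_\rho$ one has $\kappa=\kappa_0$ (we are outside the support of $f$), and both $V(\cdot,x_s)$ and $G(\cdot,x)$ satisfy the Sommerfeld radiation condition with this wave number, together with $G=O(|\xi|^{-1/2})$, $\partial_\nu G-{\rm i}\kappa_0 G=O(|\xi|^{-3/2})$ and the analogue for $V$ (cf.\ \cite{L10,YLZ22} and the proof of Lemma \ref{lem1}); writing $V\partial_\nu G-G\partial_\nu V=V(\partial_\nu G-{\rm i}\kappa_0 G)-G(\partial_\nu V-{\rm i}\kappa_0 V)$ then shows the $\partial B_\rho$ integral is $o(1)$. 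Rearranging the resulting identity and rewriting the $\partial D$ integrand with the opposite sign gives the stated representation.

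The step I expect to be the main obstacle is the bookkeeping across the two distinct interfaces: $V$ solves a Helmholtz equation whose coefficient jumps on $\Gamma$, while the coefficient in the equation for $G(\cdot,x)$ jumps on $\Gamma_0$, and Green's identity is applied on a region cut by both curves. Checking that $u$ and $G$ carry the same transmission conditions, so that every interface contribution cancels when the pieces are summed, is the delicate point; once that is settled, the singularity extraction at $x$ and the decay estimate on $\partial B_\rho$ are routine and mirror the corresponding steps in Lemma \ref{lem1}.
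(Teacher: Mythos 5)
Your argument follows essentially the same route as the paper's proof: Green's second identity on $B_\rho\setminus(\overline D\cup\overline{B_\varepsilon(x)})$, the pointwise identity $\kappa_0^2(\xi)-\kappa^2(\xi)=\beta\chi(\xi)$ combined with $u=V+G(\cdot,x_s)$ to collapse the volume term to $\beta\int_B\chi(\xi) G(\xi,x)u(\xi,x_s)\,{\rm d}\xi$, the small-disc limit at $x$ producing $V(x,x_s)$ (the paper splits $G=\Phi_{\kappa_1}+G^s$ and checks the cases $x\in\R^2_\pm$ and $x\in B_1$ separately, which is the same content as your ``logarithmic singularity plus smooth remainder'' remark), and cancellation of all contributions along $\Gamma$ and $\Gamma_0$, which the paper uses implicitly and you spell out explicitly (your observation that no disc around $x_s$ is needed is likewise implicit in the paper, since Problem (\ref{a12}) carries no delta source). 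The one substantive difference is the treatment of the $\partial B_\rho$ term: you discard it by asserting the pointwise bounds $V=O(|\xi|^{-1/2})$ and $\partial_\nu V-{\rm i}\kappa V=O(|\xi|^{-3/2})$ ``in analogy with $G$''. For $G$ these follow from its explicit two-layer representation, but for $V$ the uniform pointwise bound $V=O(|\xi|^{-1/2})$ does not follow from the radiation condition in (\ref{a12}) alone, and it is exactly the point the paper's argument is organized to avoid: the paper instead proves the weaker statement $\int_{\partial B_\rho}|V(\xi,x_s)|^2\,{\rm d}s(\xi)=O(1)$ (claim (\ref{a29})) by a Rellich-type argument --- expanding $\int_{\partial B_\rho}|\partial_\nu V-{\rm i}\kappa V|^2$, applying Green's identity in $B_\rho\setminus\overline D$ and taking imaginary parts --- and then kills the term $\int_{\partial B_\rho}V\left(\partial_\nu G-{\rm i}\kappa G\right){\rm d}s$ by Cauchy--Schwarz against the $O(|\xi|^{-3/2})$ decay of $\partial_\nu G-{\rm i}\kappa G$, while the remaining term $\int_{\partial B_\rho}G\left(\partial_\nu V-{\rm i}\kappa V\right){\rm d}s$ only needs the radiation condition for $V$ together with $G=O(|\xi|^{-1/2})$. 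So either justify the pointwise decay of $V$ (e.g.\ from an integral representation of $V$ in terms of the layered Green function, or by citing the asymptotics in \cite{YLZ22}), or replace that step by the paper's $L^2$-on-circles estimate; with that adjustment your proof coincides with the paper's.
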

\begin{proof}
For any $x\in\R^2\setminus(\Gamma\cup\overline{D})$, we choose a sufficiently small $\varepsilon>0$ and a sufficiently large $\rho>0$ such that the disc $B_{\varepsilon}(x)$ with $x$ as the center and $\varepsilon$ as the radius contains in the domain $\Omega_{\rho}:=B_{\rho}\setminus(\Gamma\cup\overline{D})$, where $B_{\rho}$ stands for the disc with the origin as the center and $\rho$ as the radius. Using the Green's theorem to the functions $V(\xi,x_s)$ and $G(\xi,x)$ in the domain $\Omega_{\rho}\setminus B_{\varepsilon}(x)$, we have
\begin{eqnarray}\nonumber
&&\int_{\Omega_{\rho}\setminus B_{\varepsilon}(x)}\left[\Delta V(\xi,x_s)G(\xi,x)-\Delta G(\xi,x)V(\xi,x_s)\right]{\rm d}\xi\\\nonumber
&&=\int_{\partial B_{\rho}\cup\partial D\cup\partial B_{\varepsilon}(x)}\left[\frac{\partial V(\xi,x_s)}{\partial \nu(\xi)}G(\xi,x)-\frac{\partial G(\xi,x)}{\partial\nu(\xi)}V(\xi,x_s)\right]{\rm d}s(\xi)\\\label{a24}
&&:=J_1+J_2+J_3,
\end{eqnarray}
where $\nu(\xi)$ denotes the unit normal which directs into the interior of $B_{\varepsilon}(x)$ when $\xi\in\partial B_{\varepsilon}(x)$, and directs into the exterior of $B_{\rho}$ when $\xi\in\partial B_{\rho}$. By (\ref{a10}) and (\ref{a12}), it is easily seen that 
\begin{eqnarray}\label{a25}
\lim_{\varepsilon\to 0}\int_{\Omega_{\rho}\setminus B_{\varepsilon}(x)}\left[\Delta V(\xi,x_s)G(\xi,x)-\Delta G(\xi,x)V(\xi,x_s)\right]{\rm d}\xi=\beta\int_B\chi(\xi)G(\xi,x)u(\xi,x_s){\rm d}\xi.
\end{eqnarray}
For the item $J_1$, we have
\begin{eqnarray*}\nonumber
J_1&=&\int_{\partial B_{\rho}}\left[\frac{\partial V(\xi,x_s)}{\partial\nu(\xi)}-{\rm i}\kappa(\xi) V(\xi,x_s)\right]G(\xi,x){\rm d}s(\xi)\\\nonumber
&&-\int_{\partial B_{\rho}}\left[\frac{\partial G(\xi,x)}{\partial\nu(\xi)}-{\rm i}\kappa(\xi) G(\xi,x)\right]V(\xi,x_s){\rm d}s(\xi)\\\label{a26}
&:=&J_{11}-J_{12}.
\end{eqnarray*}
Since 
\begin{eqnarray*}\label{a27}
G(\xi,x)=O(|\xi|^{-\frac{1}{2}}),\qquad\frac{\partial V(\xi,x_s)}{\partial \nu(\xi)}-{\rm i}\kappa(\xi) V(\xi,x_s)=O(|\xi|^{-\frac{3}{2}})
\end{eqnarray*}
it follows from the Cauchy-Schwarz inequality that 
\begin{eqnarray*}\label{a28}
\lim_{\rho\to\infty}J_{11}=0.
\end{eqnarray*}
For the item $J_{12}$, we claim that 
\begin{eqnarray}\label{a29}
\int_{\partial B_{\rho}}|V(\xi,x_s)|^2{\rm d}s(\xi)=O(1)\quad{\rm as}\;\;\rho\to\infty.
\end{eqnarray}
To show this, it is found by the Sommerfeld radiation condition that 
\begin{eqnarray}\label{a30}
\int_{\partial B_{\rho}}\left[\left|\frac{\partial V(\xi,x_s)}{\partial\nu(\xi)}\right|^2+\kappa^2(\xi)|V(\xi,x_s)|^2+2\kappa(\xi){\rm Im}\left(V(\xi,x_s)\frac{\partial\overline{V(\xi,x_s)}}{\partial\nu(\xi)}\right)\right]{\rm d}s(\xi)\to 0
\end{eqnarray}
as $\rho\to\infty$. Applying the Green theorem for $V(\xi,x_s)$ and $\overline{V(\xi,x_s)}$ in the domain $B_{\rho}\setminus\overline{D}$ 
implies that 
\begin{eqnarray}\nonumber
&&\int_{\partial B_{\rho}}V(\xi,x_s)\frac{\partial \overline{V(\xi,x_s)}}{\partial\nu(\xi)}{\rm d}s(\xi)\\\nonumber
&=&\int_{B_{\rho}\setminus\overline{D}}\left(|\nabla V(\xi,x_s)|^2-\kappa^2(\xi)|V(\xi,x_s)|^2\right){\rm d}\xi\\\label{a31}
&-&\int_{\partial D}V(\xi,x_s)\frac{\partial \overline{V(\xi,x_s)}}{\partial\nu(\xi)}{\rm d}s(\xi)+\beta\int_{B}\chi(\xi)V(\xi,x_s)\overline{G(\xi,x_s)}{\rm d}\xi.
\end{eqnarray}
Taking the imaginary part of (\ref{a31}) and substituting it to (\ref{a30}) gives that 
\begin{eqnarray*}
&&\lim_{\rho\to\infty}\frac{1}{2}\int_{\partial B_{\rho}}\left(\frac{1}{\kappa(\xi)}\left|\frac{\partial V(\xi,x_s)}{\partial\nu(\xi)}\right|^2+\kappa(\xi)|V(\xi,x_s)|^2\right){\rm d}s(\xi)\\
&&={\rm Im}\left\{\int_{\partial D}V(\xi,x_s)\frac{\partial \overline{V(\xi,x_s)}}{\partial\nu(\xi)}{\rm d}s(\xi)-\beta\int_{B}\chi(\xi)V(\xi,x_s)\overline{G(\xi,x_s)}{\rm d}\xi\right\}
\end{eqnarray*}
which implies that the claim (\ref{a29}) holds true. Since
\begin{eqnarray}\label{a32}
\frac{\partial G(\xi,x)}{\partial \nu(\xi)}-{\rm i}\kappa(\xi) G(\xi,x)=O(|\xi|^{-\frac{3}{2}}),
\end{eqnarray}
it follows from (\ref{a29}), (\ref{a32}), and the Cauchy-Schwarz inequality that $\lim\limits_{\rho\to\infty}J_{12}=0$. Hence, we obtain
\begin{eqnarray}\label{a33}
\lim_{\rho\to\infty}J_{1}=0.
\end{eqnarray}

For the item $J_3$, if $x\in\R^2_+$, using $G(\xi,x)=\Phi_{\kappa_1}(\xi,x)+G^s(\xi,x)$, we have 
\begin{eqnarray*}\nonumber
J_3 &=& \int_{\partial B_{\varepsilon}(x)}\left(\Phi_{\kappa_1}(\xi,x)\frac{\partial V(\xi,x_s)}{\partial \nu(\xi)}-\frac{\partial\Phi_{\kappa_1}(\xi,x)}{\partial \nu(\xi)}V(\xi,x_s)\right){\rm d}s(\xi)\\\nonumber
&&+\int_{\partial B_{\varepsilon}(x)}\left(G^s(\xi,x)\frac{\partial V(\xi,x_s)}{\partial \nu(\xi)}-\frac{\partial G^s(\xi,x)}{\partial \nu(\xi)}V(\xi,x_s)\right){\rm d}s(\xi)\\\label{a34}
&:=&J_{31}+J_{32}.
\end{eqnarray*}
A direct calculation, using the mean value theorem, shows that
\begin{eqnarray*}\label{a35}
\lim_{\varepsilon\to 0}J_{31}=-V(x,x_s).
\end{eqnarray*}
For the item $J_{32}$, if $x\in\R^2_+\cap\Omega_1$, by the Green theorem, (\ref{a10}) and (\ref{a12}), we have
\begin{eqnarray*}
J_{32}=\int_{B_{\varepsilon}(x)}\left(\Delta V(\xi, x_s)G^s(\xi,x)-\Delta G^s(\xi,x)V(\xi,x)\right){\rm d}\xi=0.
\end{eqnarray*}
If $x\in B_1$, combining the Green theorem, (\ref{a10}) and (\ref{a12}) shows that
\begin{eqnarray*}
J_{32}&=&\int_{B_{\varepsilon}(x)}\left(\Delta V(\xi, x_s)G^s(\xi,x)-\Delta G^s(\xi,x)V(\xi,x)\right){\rm d}\xi\\
&=&\beta\int_{B_{\varepsilon}(x)}G^s(\xi,x)u(\xi,x_s){\rm d}\xi\to 0 \quad{\rm as}\;\;\varepsilon\to 0.
\end{eqnarray*}
If $x\in\R^2_-$, by a similar argument, we can obtain the same result. Thus, we conclude that 
\begin{eqnarray}\label{a36}
\lim_{\varepsilon\to 0}J_3=-V(x,x_s).
\end{eqnarray}
It is found by (\ref{a24}), (\ref{a25}), (\ref{a33}), and (\ref{a36}) that the Lemma holds true. The proof is completed.
\end{proof}

With the help of Lemma \ref{lem1}, Lemma \ref{lem2}, and Lemma \ref{lem3}, we are in position to present the main 
result of this paper, which focuses on the resolution analysis of the RTM approach for simultaneously reconstruction 
of the locally rough interface and the embedded obstacle.

\begin{theorem}\label{thm}
For any $z\in S$, let $\psi(\xi,z)$ be the solution of 
\begin{equation}\label{a37}
\left\{\begin{array}{lll}
         \Delta \psi(\xi,z)+\kappa^2(\xi)\psi(\xi,z)=\beta\chi(\xi){\rm Im}G(\xi,z)\qquad\qquad&{\rm in}\;\; \R^2\setminus\overline{D}, \\[2mm]
         \psi(\xi,z)=-{\rm Im}G(\xi,z)\qquad\qquad&{\rm on}\;\; \partial D,\\[2mm]
          \lim\limits_{|\xi|\rightarrow \infty}|\xi|^{\frac{1}{2}}\left(\partial_{|\xi|} \psi(\xi,z)-{\rm i}\kappa(\xi) \psi(\xi,z)\right)=0,
       \end{array}
\right.
\end{equation}
and $\psi^{\infty}(\hat{\xi},z)$ be the corresponding far-field pattern. Then we have
\begin{eqnarray*}\label{a38}
\widetilde{{\rm Ind}}(z)=\int_{{\mathbb S}}\kappa(\hat{\xi})|\psi^{\infty}(\hat{\xi},z)|^2{\rm d}s(\hat{\xi})+\eta(z),\qquad \forall z\in S,
\end{eqnarray*}
where $\|\eta(z)\|_{L^{\infty}(S)}\leq CR^{-1}$ with some constant $C$ depending on $B$ and $D$.
\end{theorem}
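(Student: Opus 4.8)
The plan is to reduce the double boundary integral over $\Gamma_r=\Gamma_s=\partial B_R$ in $\widetilde{{\rm Ind}}(z)$ to a far-field integral by first eliminating the receiver variable $x_r$, then the source variable $x_s$, using Lemma~\ref{lem3} together with the two forms of the Helmholtz--Kirchhoff identity in Lemmas~\ref{lem1}--\ref{lem2}, and finally by invoking an energy (optical-theorem) identity for Problem~(\ref{a37}). Fixing $z\in S$ and $x_s\in\Gamma_s$, I would apply Lemma~\ref{lem3} with $x=x_r\in\Gamma_r$ and take complex conjugates (using $\beta=\kappa_1^2-\kappa_2^2\in\R$), writing $\overline{V(x_r,x_s)}$ as a $\partial D$-boundary integral plus a $B$-volume integral, each linear in $\overline{G(\cdot,x_r)}$. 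Substituting this into $\int_{\Gamma_r}\kappa(x_r)G(z,x_r)\overline{V(x_r,x_s)}{\rm d}s(x_r)$, interchanging the order of integration, and using reciprocity $G(z,x_r)=G(x_r,z)$ together with Lemma~\ref{lem2} in the form $\int_{\partial B_R}\kappa(x_r)\overline{G(\xi,x_r)}G(x_r,z){\rm d}s(x_r)={\rm Im}\,G(\xi,z)+\zeta(\xi,z)$ (valid for $\xi\in\partial D\cup B\subset S$), one obtains
\begin{eqnarray*}
\int_{\Gamma_r}\kappa(x_r)G(z,x_r)\overline{V(x_r,x_s)}{\rm d}s(x_r)=A(x_s,z)+r(z,x_s),
\end{eqnarray*}
where
\begin{eqnarray*}
A(x_s,z):=\int_{\partial D}\left[\frac{\partial\overline{V(\xi,x_s)}}{\partial\nu(\xi)}{\rm Im}\,G(\xi,z)-\frac{\partial\,{\rm Im}\,G(\xi,z)}{\partial\nu(\xi)}\overline{V(\xi,x_s)}\right]{\rm d}s(\xi)-\beta\int_B\chi(\xi)\,{\rm Im}\,G(\xi,z)\,\overline{u(\xi,x_s)}\,{\rm d}\xi,
\end{eqnarray*}
and $r$ collects the $\zeta$-terms. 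Since $|\zeta|+|\nabla_\xi\zeta|\le CR^{-1}$ on $S$ by Lemma~\ref{lem2}, and $G(\cdot,x_s)=O(R^{-1/2})$ on bounded sets when $x_s\in\partial B_R$ — so that $V(\cdot,x_s)$, $\partial_\nu V(\cdot,x_s)$ and $u(\cdot,x_s)$ are $O(R^{-1/2})$ on $\partial D$ and $B$ by the well-posedness of~(\ref{a12}) — we get $|r(z,x_s)|\le CR^{-3/2}$ uniformly in $z\in S$ and $x_s\in\Gamma_s$.

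Next I would identify $A(x_s,z)$ with a far-field integral. The two facts to exploit are: $(\Delta+\kappa_0^2)\,{\rm Im}\,G(\cdot,z)=0$ in all of $\R^2$ — i.e.\ ${\rm Im}\,G(\cdot,z)=\frac{1}{2{\rm i}}(G(\cdot,z)-\overline{G(\cdot,z)})$ is singularity-free, the two point sources cancelling — and $\kappa^2-\kappa_0^2=-\beta\chi$, which says precisely that the true wave number departs from the planar one exactly on $B_1$ and $B_2$. Applying Green's second identity to $\overline{V(\cdot,x_s)}$ and ${\rm Im}\,G(\cdot,z)$ on $B_\rho\setminus(\Gamma\cup\overline D)$ for large $\rho$ (the $\Gamma$-contributions cancelling by the transmission conditions satisfied by $V$ and the smoothness of ${\rm Im}\,G$ across $\Gamma$), these two facts convert the volume term $-\beta\int_B\chi\,{\rm Im}\,G(\cdot,z)\,\overline{u(\cdot,x_s)}\,{\rm d}\xi$ into a $\partial B_\rho$-integral plus a $\partial D$-integral, the latter cancelling the $\partial D$-integral already present in $A$. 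Hence for every large $\rho$,
\begin{eqnarray*}
A(x_s,z)=\int_{\partial B_\rho}\left(\overline{V(\xi,x_s)}\frac{\partial\,{\rm Im}\,G(\xi,z)}{\partial\nu(\xi)}-{\rm Im}\,G(\xi,z)\frac{\partial\overline{V(\xi,x_s)}}{\partial\nu(\xi)}\right){\rm d}s(\xi).
\end{eqnarray*}
Feeding in the Sommerfeld asymptotics of $V(\cdot,x_s)$ (radiating, with limiting wave number $\kappa_0(\hat\xi)$) and of ${\rm Im}\,G(\cdot,z)$, the integrand reduces to $\kappa_0(\hat\xi)\,\overline{V^\infty(\hat\xi,x_s)}\,G^\infty(\hat\xi,z)\,|\xi|^{-1}+o(|\xi|^{-1})$, whence, letting $\rho\to\infty$,
\begin{eqnarray*}
A(x_s,z)=\int_{{\mathbb S}}\kappa_0(\hat\xi)\,\overline{V^\infty(\hat\xi,x_s)}\,G^\infty(\hat\xi,z)\,{\rm d}s(\hat\xi).
\end{eqnarray*}

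To eliminate the source integral I would use that, by linearity, Problem~(\ref{a37}) is Problem~(\ref{a12}) with the ``data function'' $G(\cdot,x_s)$ replaced by ${\rm Im}\,G(\cdot,z)$; writing $\mathcal L$ for the corresponding solution operator, so $\psi(\cdot,z)=\mathcal L({\rm Im}\,G(\cdot,z))$ and $V(\cdot,\xi')=\mathcal L(G(\cdot,\xi'))$. Taking the complex conjugate of Lemma~\ref{lem2} and using that ${\rm Im}\,G$ is real gives ${\rm Im}\,G(\cdot,z)=\int_{\partial B_R}\kappa(\xi')\overline{G(\xi',z)}\,G(\cdot,\xi')\,{\rm d}s(\xi')-\overline{\zeta(\cdot,z)}$ on $S$; applying $\mathcal L$, taking far-field patterns, conjugating and using reciprocity then yields $\int_{\Gamma_s}\kappa(x_s)G(z,x_s)\overline{V^\infty(\hat\xi,x_s)}\,{\rm d}s(x_s)=\overline{\psi^\infty(\hat\xi,z)}+O(R^{-1})$, the error being the (conjugated) far-field of $\mathcal L(\overline{\zeta(\cdot,z)})$, which is $O(R^{-1})$ uniformly in $z\in S$ and $\hat\xi$ by the well-posedness of~(\ref{a12}) and $\|\zeta(\cdot,z)\|_{W^{1,\infty}(S)}\le CR^{-1}$. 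Substituting the first two steps into~(\ref{a17}), multiplying by $\kappa(x_s)G(z,x_s)$ and integrating over $\Gamma_s$ (the $r$-term contributing only $O(R^{-1})$, since $|\Gamma_s|=O(R)$, $|G(z,x_s)|=O(R^{-1/2})$ and $|r|=O(R^{-3/2})$), we reach
\begin{eqnarray*}
\widetilde{{\rm Ind}}(z)=-{\rm Im}\int_{{\mathbb S}}\kappa_0(\hat\xi)\,G^\infty(\hat\xi,z)\,\overline{\psi^\infty(\hat\xi,z)}\,{\rm d}s(\hat\xi)+\eta(z),\qquad \|\eta\|_{L^\infty(S)}\le CR^{-1}.
\end{eqnarray*}
Finally I would close the argument with an energy identity: put $\phi(\cdot,z):=\psi(\cdot,z)+{\rm Im}\,G(\cdot,z)$, which by the two facts above solves $(\Delta+\kappa^2)\phi=0$ in $\R^2\setminus\overline D$ with $\phi=0$ on $\partial D$, so that ${\rm Im}\int_{\partial B_\rho}\overline{\phi(\xi,z)}\,\partial_{\nu(\xi)}\phi(\xi,z)\,{\rm d}s(\xi)=0$ for all large $\rho$ (by Green's first identity on $B_\rho\setminus(\Gamma\cup\overline D)$). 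Writing the far field of $\phi(\cdot,z)$ as an outgoing part of amplitude $\psi^\infty(\hat\xi,z)+\frac{1}{2{\rm i}}G^\infty(\hat\xi,z)$ plus an incoming part of amplitude $-\frac{1}{2{\rm i}}\overline{G^\infty(\hat\xi,z)}$ and substituting into this zero-flux relation (the oscillatory cross-terms being real and hence killed by ${\rm Im}(\cdot)$, and the $\frac14|G^\infty|^2$-terms cancelling) leaves
\begin{eqnarray*}
\int_{{\mathbb S}}\kappa_0(\hat\xi)\,|\psi^\infty(\hat\xi,z)|^2\,{\rm d}s(\hat\xi)=-{\rm Im}\int_{{\mathbb S}}\kappa_0(\hat\xi)\,G^\infty(\hat\xi,z)\,\overline{\psi^\infty(\hat\xi,z)}\,{\rm d}s(\hat\xi),
\end{eqnarray*}
which combined with the previous display — and with the identification $\kappa(\hat\xi)=\kappa_0(\hat\xi)$ for $\hat\xi\in{\mathbb S}$, i.e.\ the limiting wave number in direction $\hat\xi$ — is exactly the assertion.

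The step I expect to be the main obstacle is the second one: recognizing that ${\rm Im}\,G(\cdot,z)$ is a global, singularity-free solution of the planar-layered Helmholtz equation and that $\kappa^2-\kappa_0^2=-\beta\chi$, which is what permits trading the volume term produced by Lemma~\ref{lem3} — via Green's identity — for a boundary integral at infinity that collapses onto $\int_{{\mathbb S}}\kappa_0\,\overline{V^\infty}\,G^\infty$. Beyond that, the recurring technical nuisance is that $\kappa_0(\hat\xi)$ is only piecewise constant, so the oscillatory factors $e^{\pm 2{\rm i}\kappa_0(\hat\xi)\rho}$ appearing in the far-field and large-$\rho$ expansions do not decay; one must check at each stage that their contributions are purely real and are therefore annihilated by the imaginary part. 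Keeping all remainder estimates uniform in $z\in S$ rests on the uniform $O(R^{-1/2})$ decay of the relevant fields for source points on $\partial B_R$ together with the continuous dependence of the far-field map of~(\ref{a12}) on its data.
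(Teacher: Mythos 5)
Your argument is correct (at the same level of formal far-field asymptotics that the paper itself uses), and after the first step it follows a genuinely different route. Your opening move coincides with the paper's: applying Lemma \ref{lem3} to $\overline{V(x_r,x_s)}$ and collapsing the receiver integral with Lemma \ref{lem2} is exactly the paper's computation of $\widetilde{W}(z,x_s)$ with $\gamma(\xi,z)={\rm Im}G(\xi,z)+\zeta(\xi,z)$. From there the paper stays in the near field: it performs the $x_s$-integration to form $\phi_1,\phi_2$, identifies $\overline{\phi_1}$ as the scattered field for the incident data ${\rm Im}G+\zeta$, splits it as $\psi+\varphi$, reduces $\widetilde{\rm Ind}(z)$ to ${\rm Im}\{\int_{\partial D}\partial_\nu\overline{\psi}\,\psi\,{\rm d}s+\beta\int_B\chi\overline{\psi}\,{\rm Im}G\,{\rm d}\xi\}+\eta(z)$, and converts this by a single Green identity for $\psi,\overline{\psi}$ on $B_\rho\setminus\overline{D}$ into $\int_{\mathbb S}\kappa|\psi^\infty|^2$. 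You instead convert the per-source quantity $A(x_s,z)$ into the far-field cross-correlation $\int_{\mathbb S}\kappa_0\overline{V^\infty}G^\infty$ via an extra Green identity pairing $\overline{V}$ with ${\rm Im}G$ (exploiting $\kappa^2-\kappa_0^2=-\beta\chi$ and the fact that ${\rm Im}G$ is a globally smooth solution of the planar-layered equation — facts the paper never needs), carry out the $x_s$-integration at the far-field level through Lemma \ref{lem2} and superposition to reach $-{\rm Im}\int_{\mathbb S}\kappa_0 G^\infty\overline{\psi^\infty}$, and close with an optical-theorem identity for $\psi+{\rm Im}G$; I checked your sign conventions, the cancellation of the $\partial D$ terms, the exact cancellation of the $e^{\pm2{\rm i}\kappa_0\rho}$ cross-terms, and the algebra $|a|^2-|b|^2=|\psi^\infty|^2-{\rm Im}(\psi^\infty\overline{G^\infty})$, and they are right. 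What each buys: the paper's route needs only the standard radiating expansion of $\psi$ at the very last limit and absorbs both $R$-growing boundary integrals through Lemma \ref{lem2}, so the remainder estimate $\|\eta\|_{L^\infty(S)}\le CR^{-1}$ falls out directly from the bounds on $\zeta$, $\psi$, $\varphi$; your route makes the structure of the indicator more transparent (far-field cross-correlation plus optical theorem), but it costs you the incoming/outgoing splitting of the layered Green function with its non-decaying oscillatory terms (uniformity of those asymptotics near horizontal directions is delicate, though the paper is equally informal there), and it forces the extra ingredient that $V(\cdot,x_s)$, $\partial_\nu V(\cdot,x_s)$, $u(\cdot,x_s)$ are uniformly $O(R^{-1/2})$ on $B\cup\partial D$ for $x_s\in\partial B_R$ in order to beat the factor $|\Gamma_s|\sim R$ in the $r$-term — a point you could have sidestepped, as the paper does, by integrating the $\zeta$-terms against $\kappa(x_s)G(z,x_s)$ first and bounding the resulting $\phi_1$-, $\phi_2$-type fields by well-posedness.
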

\begin{proof}
Define 
\begin{eqnarray}\label{a39}
\widetilde{W}(z,x_s)=\int_{\Gamma_r}\kappa(x_r)G(z,x_r)\overline{V(x_r,x_s)}{\rm d}s(x_r),
\end{eqnarray}
substituting the Green formula presented in Lemma \ref{lem3} into $\widetilde{W}(z,x_s)$ gives that 
\begin{eqnarray*}\label{a40}
\widetilde{W}(z,x_s)=\int_{\partial D}\left[\frac{\partial \overline{V(\xi,x_s)}}{\partial\nu(\xi)}\gamma(\xi,z)-\frac{\partial \gamma(\xi,z)}{\partial \nu(\xi)}\overline{V(\xi,x_s)}\right]{\rm d}s(\xi)-\beta\int_B\chi(\xi)\overline{u(\xi,x_s)}\gamma(\xi,z){\rm d}\xi
\end{eqnarray*}
with 
\begin{eqnarray}\label{a41}
\gamma(\xi,z):=\int_{\Gamma_r}\kappa(x_r)G(z,x_r)\overline{G(\xi,x_r)}{\rm d}s(x_r)={\rm Im}G(\xi,z)+\zeta(\xi,z)
\end{eqnarray}
where we use Lemma \ref{lem2}. According to (\ref{a17}) and (\ref{a39}), we obtain 
\begin{eqnarray}\nonumber
\widetilde{\rm Ind}(z)&=&-{\rm Im}\int_{\Gamma_s}\kappa(x_s)G(z,x_s)\widetilde{W}(z,x_s){\rm d}s(x_s)\\\nonumber
&=&-{\rm Im}\int_{\partial D}\left[\frac{\partial \phi_1(\xi,z)}{\partial\nu(\xi)}\gamma(\xi,z)-\frac{\partial \gamma(\xi,z)}{\partial \nu(\xi)}\phi_1(\xi,z)\right]{\rm d}s(\xi)\\\label{a42}
&&+{\rm Im}\beta\int_B\chi(\xi)\phi_2(\xi,z)\gamma(\xi,z){\rm d}\xi
\end{eqnarray}
where $\phi_1(\xi,z)$ and $\phi_2(\xi,z)$ are defined by
\begin{eqnarray*}\label{a43}
\phi_1(\xi,z):=\int_{\Gamma_s}\kappa(x_s)G(z,x_s)\overline{V(\xi,x_s)}{\rm d}s(x_s)\\\label{a44}
\phi_2(\xi,z):=\int_{\Gamma_s}\kappa(x_s)G(z,x_s)\overline{u(\xi,x_s)}{\rm d}s(x_s).
\end{eqnarray*}
By the identity $u(\xi,x_s)=V(\xi,x_s)+G(\xi,x_s)$ and Lemma \ref{lem2}, it is easily found that
\begin{eqnarray}\label{a45}
\phi_2(\xi,z)=\phi_1(\xi,z)+{\rm Im}G(\xi,z)+\overline{\zeta(\xi,z)}.
\end{eqnarray}
Since $V(\xi,x_s)$ satisfies Problem (\ref{a12}), it is easily checked that $\overline{\phi_1(\xi,z)}$ solves 
\begin{equation}\label{a46}
\left\{\begin{array}{lll}
         \Delta \overline{\phi_1(\xi,z)}+\kappa^2(\xi)\overline{\phi_1(\xi,z)}=\beta\chi(\xi)\left[{\rm Im}G(\xi,z)+\zeta(\xi,z)\right]\qquad\qquad&{\rm in}\;\; \R^2\setminus\overline{D}, \\[3mm]
         \overline{\phi_1(\xi,z)}=-\left[{\rm Im}G(\xi,z)+\zeta(\xi,z)\right]\qquad\qquad&{\rm on}\;\; \partial D,\\[3mm]
          \lim\limits_{|\xi|\rightarrow \infty}|\xi|^{\frac{1}{2}}\left(\partial_{|\xi|} \overline{\phi_1(\xi,z)}-{\rm i}\kappa(\xi) \overline{\phi_1(\xi,z)}\right)=0,
       \end{array}
\right.
\end{equation}
where we use Lemma \ref{lem2} to obtain the right hand term. Let $\psi(\xi,z)$ and $\varphi(\xi,z)$ solve Problem (\ref{a46}) expect the right hand term are replaced by $(\beta\chi(\xi){\rm Im}G(\xi,z), -{\rm Im}G(\xi,z))$ and $(\beta\chi(\xi)\zeta(\xi,z), -\zeta(\xi,z))$, respectively. Then by the linearity we have 
\begin{eqnarray}\label{a47}
\overline{\phi_1(\xi,z)}=\psi(\xi,z)+\varphi(\xi,z).
\end{eqnarray}
Substituting (\ref{a41}), (\ref{a45}) and (\ref{a47}) into (\ref{a42}) implies that 
\begin{align*}
\widetilde{\rm Ind}(z)&=-{\rm Im}\int_{\partial D}\bigg\{\left[{\rm Im}G(\xi,z)+\zeta(\xi,z)\right]\frac{\partial}{\partial\nu(\xi)}\left[\overline{\psi(\xi,z)}+\overline{\varphi(\xi,z)}\right]\\
&\quad -\left[\overline{\psi(\xi,z)}+\overline{\varphi(\xi,z)}\right]\frac{\partial}{\partial\nu(\xi)}\left[{\rm Im}G(\xi,z)+\zeta(\xi,z)\right]\bigg\}{\rm d}s(\xi)\\
&\quad+{\rm Im}\beta\int_B\chi(\xi)\left[\overline{\psi(\xi,z)}+\overline{\varphi(\xi,z)}+{\rm Im}G(\xi,z)+\overline{\zeta(\xi,z)}\right]\left[{\rm Im}G(\xi,z)+\zeta(\xi,z)\right]{\rm d}\xi\\
&=-{\rm Im}\int_{\partial D}\left[\frac{\partial \overline{\psi(\xi,z)}}{\partial\nu(\xi)}{\rm Im}G(\xi,z)-\frac{\partial{\rm Im}G(\xi,z)}{\partial\nu(\xi)}\overline{\psi(\xi,z)}\right]{\rm d}s(\xi)\\
&\quad+{\rm Im}\beta\int_B\chi(\xi)\overline{\psi(\xi,z)}{\rm Im}G(\xi,z){\rm d}\xi+\eta(z)\\
&={\rm Im}\int_{\partial D}\frac{\partial \overline{\psi(\xi,z)}}{\partial\nu(\xi)}\psi(\xi,z){\rm d}s(\xi)+{\rm Im}\beta\int_B\chi(\xi)\overline{\psi(\xi,z)}{\rm Im}G(\xi,z){\rm d}\xi+\eta(z),
\end{align*}
where $\eta(z)$ is defined by
\begin{eqnarray*}
\eta(z)&:=&-{\rm Im}\int_{\partial D}\left[\frac{\partial\overline{\psi(\xi,z)}}{\partial\nu(\xi)}\zeta(\xi,z)-\frac{\partial\zeta(\xi,z)}{\partial\nu(\xi)}\overline{\psi(\xi,z)}\right]{\rm d}s(\xi)\\
&&-{\rm Im}\int_{\partial D}\left[\frac{\partial\overline{\varphi(\xi,z)}}{\partial\nu(\xi)}{\rm Im}G(\xi,z)-\frac{\partial{\rm Im}G(\xi,z)}{\partial\nu(\xi)}\overline{\varphi(\xi,z)}\right]{\rm d}s(\xi)\\
&&-{\rm Im}\int_{\partial D}\left[\frac{\partial\overline{\varphi(\xi,z)}}{\partial\nu(\xi)}\zeta(\xi,z)-\frac{\partial\zeta(\xi,z)}{\partial\nu(\xi)}\overline{\varphi(\xi,z)}\right]{\rm d}s(\xi)\\
&&+{\rm Im}\beta\int_B\chi(\xi)\left[\overline{\psi(\xi,z)}\zeta(\xi,z)+\overline{\varphi(\xi,z)}{\rm Im}G(\xi,z)+\overline{\varphi(\xi,z)}\zeta(\xi,z)\right]{\rm d}s(\xi).
\end{eqnarray*}
We choose a sufficiently large $\rho>0$ such that $B\cup D\subset B_{\rho}$ and apply the Green theorem for $\psi(\xi,z)$ and $\overline{\psi(\xi,z)}$ in the domain $B_{\rho}\setminus\overline{D}$ to get 
\begin{eqnarray*}
&&\int_{B_{\rho}\setminus\overline{D}}\Delta\psi(\xi,z)\overline{\psi(\xi,z)}{\rm d}\xi\\
&&=\int_{B_{\rho}\setminus\overline{D}}\left[\beta\chi(\xi){\rm Im}G(\xi,z)-\kappa^2(\xi)\psi(\xi,z)\right]\overline{\psi(\xi,z)}{\rm d}\xi\\
&&=\int_{\partial B_{\rho}}\frac{\partial \psi(\xi,z)}{\partial\nu(\xi)}\overline{\psi(\xi,z)}{\rm d}s(\xi)+\int_{\partial D}\frac{\partial \psi(\xi,z)}{\partial\nu(\xi)}\overline{\psi(\xi,z)}{\rm d}s(\xi)-\int_{B_{\rho}\setminus\overline{D}}|\nabla\psi(\xi,z)|^2{\rm d}\xi.
\end{eqnarray*}
Inserting the imaginary part on both sides of the above equation gives that
\begin{eqnarray*}\label{a48}
{\rm Im}\left\{\int_{\partial D}\frac{\partial \overline{\psi(\xi,z)}}{\partial\nu(\xi)}\psi(\xi,z){\rm d}s(\xi)+\beta\int_B\chi(\xi)\overline{\psi(\xi,z)}{\rm Im}G(\xi,z){\rm d}\xi\right\}={\rm Im}\int_{\partial B_{\rho}}\frac{\partial \psi(\xi,z)}{\partial\nu(\xi)}\overline{\psi(\xi,z)}{\rm d}s(\xi).
\end{eqnarray*}
Thus, we obtain 
\begin{eqnarray*}
&&\widetilde{\rm Ind}(z)={\rm Im}\int_{\partial B_{\rho}}\frac{\partial \psi(\xi,z)}{\partial\nu(\xi)}\overline{\psi(\xi,z)}{\rm d}s(\xi)+\eta(z)\\
&&={\rm Im}\int_{\partial B_{\rho}}\left[\frac{\partial \psi(\xi,z)}{\partial\nu(\xi)}-{\rm i}\kappa(\xi)\psi(\xi,z)\right]\overline{\psi(\xi,z)}{\rm d}s(\xi)+\int_{\partial B_{\rho}}\kappa(\xi)|\psi(\xi,z)|^2{\rm d}s(\xi)+\eta(z).
\end{eqnarray*}
Since 
\begin{eqnarray*}
\psi(\xi,z)=O(|\xi|^{-\frac{1}{2}}),\qquad\frac{\partial \psi(\xi,z)}{\partial \nu(\xi)}-{\rm i}\kappa(\xi) \psi(\xi,z)=O(|\xi|^{-\frac{3}{2}})
\end{eqnarray*}
we have
\begin{eqnarray*}
\lim_{\rho\to\infty}\int_{\partial B_{\rho}}\left[\frac{\partial \psi(\xi,z)}{\partial\nu(\xi)}-{\rm i}\kappa(\xi)\psi(\xi,z)\right]\overline{\psi(\xi,z)}{\rm d}s(\xi)=0.
\end{eqnarray*}
It follows from the Sommerfeld radiation condition that 
\begin{eqnarray*}
\psi(\xi,z)=\frac{e^{{\rm i}\kappa |\xi|}}{|\xi|^{\frac{1}{2}}}\left\{\psi^{\infty}(\hat{\xi},z)+O\left(\frac{1}{|\xi|}\right)\right\}\quad {\rm for}\; |\xi|\to\infty
\end{eqnarray*}
which leads to 
\begin{eqnarray*}
\lim_{\rho\to\infty}\int_{\partial B_{\rho}}\kappa(\xi)|\psi(\xi,z)|^2{\rm d}s(\xi)=\int_{\mathbb S}\kappa(\hat{\xi})|\psi^{\infty}(\hat{\xi},z)|^2{\rm d}s(\hat{\xi}).
\end{eqnarray*}
Thus, we conclude that 
\begin{eqnarray*}
\widetilde{{\rm Ind}}(z)=\int_{{\mathbb S}}\kappa(\hat{\xi})|\psi^{\infty}(\hat{\xi},z)|^2{\rm d}s(\hat{\xi})+\eta(z),\qquad \forall z\in S.
\end{eqnarray*}

In the remaining part of the proof, we restrict ourselves to the estimate of $\eta(z)$. For $\xi\in S$, $z\in S$, due to Lemma \ref{lem2}, we have
\begin{eqnarray}\label{a49}
|\zeta(\xi,z)|+|\nabla_{\xi}\zeta(\xi,z)|\leq CR^{-1}.
\end{eqnarray}
Notice that 
\begin{eqnarray*}
{\rm Im}G(\xi,z)=\left\{\begin{array}{ll}
         \frac{1}{4}J_0(\sigma|\xi-z|)+{\rm Im}G^s(\xi,z)\qquad\qquad&{\rm for}\;\; \xi,z\in\R^2_+\;{\rm or}\;\xi,z\in\R^2_-, \\[3mm]
         {\rm Im}G^s(\xi,z)&{\rm for}\;\;\xi\in\R^2_+, z\in\R^2_-\;{\rm or}\;\xi\in\R^2_-, z\in\R^2_+
       \end{array}
\right.
\end{eqnarray*}
where $J_0$ denotes the Bessel function of order zero, $\sigma=\kappa_1$ for $\xi,z\in\R^2_+$ and $\sigma=\kappa_2$ for $\xi,z\in\R^2_-$. By the smoothness of $G^s(\xi,z)$ and $J_0$, we obtain 
\begin{eqnarray}\label{a50}
\left|{\rm Im} G(\xi,z)\right|+\left|\frac{\partial {\rm Im}G(\xi,z)}{\partial\nu(\xi)}\right|\leq C
\end{eqnarray}
for $\xi\in B\cup D$ and $z\in S$. Since $\psi(\xi,z)$ and $\varphi(\xi,z)$ solve Problem (\ref{a46}) with the boundary data  $(\beta\chi(\xi){\rm Im}G(\xi,z), -{\rm Im}G(\xi,z))$ and $(\beta\chi(\xi)\zeta(\xi,z), -\zeta(\xi,z))$, respectively, a straightforward application of Theorem 3.1 in \cite{LYZ22} leads to
\begin{eqnarray}\nonumber
&&\|\psi(\xi,z)\|_{H^{\frac{1}{2}}(\partial D)}+\left\|\frac{\partial \psi(\xi,z)}{\partial\nu(\xi)}\right\|_{H^{-\frac{1}{2}}(\partial D)}+\|\psi(\xi,z)\|_{L^2(B)}\\\nonumber
&&\lesssim \|{\rm Im}G(\xi,z)\|_{L^2(B)}+\|{\rm Im}G(\xi,z)\|_{H^{\frac{1}{2}}(\partial D)}\\\label{a51}
&&\leq C
\end{eqnarray}
and 
\begin{eqnarray}\nonumber
&&\|\varphi(\xi,z)\|_{H^{\frac{1}{2}}(\partial D)}+\left\|\frac{\partial \varphi(\xi,z)}{\partial\nu(\xi)}\right\|_{H^{-\frac{1}{2}}(\partial D)}+\|\varphi(\xi,z)\|_{L^2(B)}\\\nonumber
&&\lesssim \|\zeta(\xi,z)\|_{L^2(B)}+\|\zeta(\xi,z)\|_{H^{\frac{1}{2}}(\partial D)}\\\label{a52}
&&\leq CR^{-1},
\end{eqnarray}
where we use the trace theorem in the first step of (\ref{a51}) and (\ref{a52}). Thus, with the help of (\ref{a49})-(\ref{a52}), we obtain 
\begin{eqnarray*}
\|\eta(z)\|_{L^{\infty}(S)}\leq CR^{-1}
\end{eqnarray*}
with $C$ depending on $B$ and $D$. The proof is finished.
\end{proof}

\section{Numerical examples}
In this section, we first analyze the behavior of the indicator $\widetilde{\rm Ind}(z)$ when $z$ is close to $B\cup D$, and then present 
several numerical examples to show the effectiveness of the RTM method.

Due to Theorem \ref{thm}, it is found that the behavior of  $\widetilde{\rm Ind}(z)$ depends on the function $\psi(\xi,z)$ when the source and measurement radius $R$ is large enough. Here, the function $\psi(\xi,z)$ solves Problem (\ref{a37}) with boundary data $(\beta\chi(\xi){\rm Im}G(\xi,z), -{\rm Im}G(\xi,z))$. Notice that 
\begin{eqnarray*}
{\rm Im}G(\xi,z)=\left\{\begin{array}{ll}
         \frac{1}{4}J_0(\sigma|\xi-z|)+{\rm Im}G^s(\xi,z)\qquad\qquad&{\rm for}\;\; \xi,z\in\R^2_+\;{\rm or}\;\xi,z\in\R^2_-, \\[3mm]
         {\rm Im}G^s(\xi,z)&{\rm for}\;\;\xi\in\R^2_+, z\in\R^2_-\;{\rm or}\;\xi\in\R^2_-, z\in\R^2_+
       \end{array}
\right.
\end{eqnarray*}
where $\sigma=\kappa_1$ for $\xi,z\in\R^2_+$ and $\sigma=\kappa_2$ for $\xi,z\in\R^2_-$. Observing from (a) of Figure \ref{f2}, the function $J_0(\lambda|\xi-z|)$ achieves a maximum at $\xi=z$, thus, we guess that ${\rm Im}G(\xi,z)$ will also achieves a maximum at $\xi=z$, which is conformed numerically in (b) and (c) of Figure \ref{f2}. Based on this observation, we can expect that $\widetilde{\rm Ind}(z)$ will peak on $B\cup D$.

\begin{figure}[htp]
\begin{center}
\subfigure[$J_0$]{\includegraphics[width=0.31\textwidth]{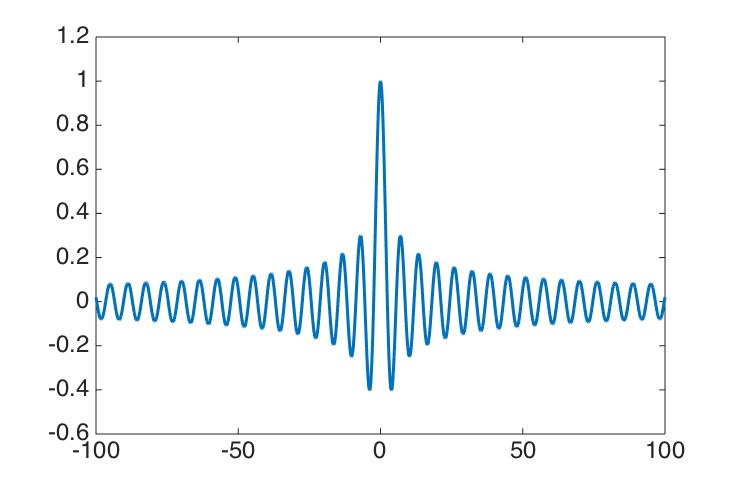}}
\subfigure[${\rm Im}G(x,z)$ with $z=(0,0.5)$ ]{\includegraphics[width=0.31\textwidth]{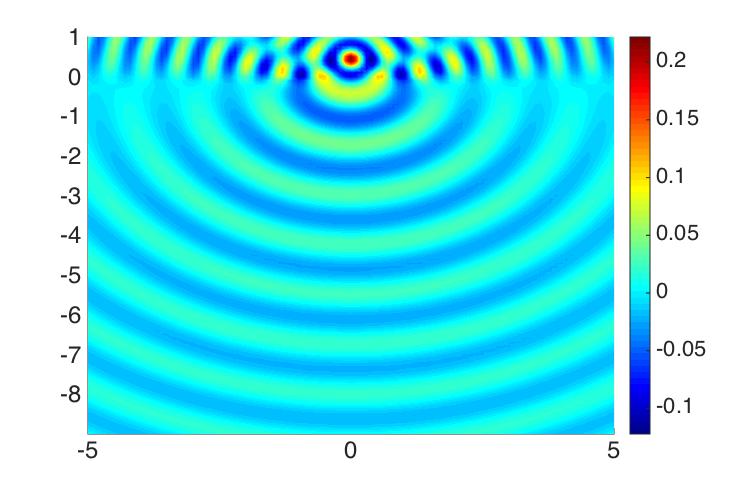}}
\subfigure[${\rm Im}G(x,z)$ with $z=(0,-0.5)$]{\includegraphics[width=0.31\textwidth]{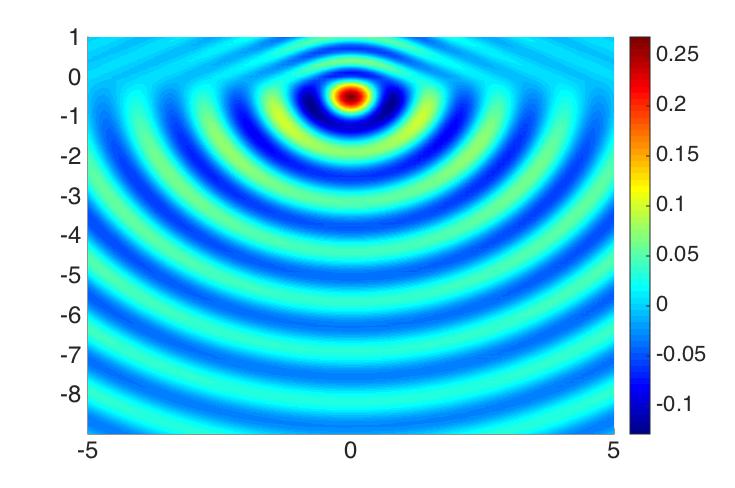}}
\caption{The image of functions $J_0$,  ${\rm Im}G(\xi,z)$  with $\xi\in [-5, 5]\times[-8.95,1.05]$, $\kappa_1=10$, and $\kappa_2=5$.}\label{f2} 
\end{center}
\end{figure}

In all examples, if not stated otherwise, we assume that the wavenumber $\kappa_1=10$, $\kappa_2=5$, the locally rough interface function $f$ is supported in the interval $[-5, 5]$, the sample domain $S=[-5, 5]\times[-8.95, 1.05]$, $N_s=N_r=1024$, and $R=100$. 
The synthetic data $u^s(x_r,x_s)$ and $G^s(x_r,x_s)$ are generated by solving Problems (\ref{a5}) and (\ref{a10}) through the 
Nystr\"{o}m method \cite{LYZ13}. For some relative error $\tau>0$, we inject some noise into the data by defining 
\begin{eqnarray*}
u_{\tau}^s(x_r,x_s)=u^s(x_r,x_s)+\tau\frac{\lambda}{\|\lambda\|_2}\|u^s(x_r,x_s)\|_2
\end{eqnarray*}
where $\lambda=\lambda_1+{\rm i}\lambda_2$ is complex-valued with $\lambda_1$ and $\lambda_2$ consisting of random numbers obeying standard normal distribution $N(0,1)$. 

{\bf Example 1.} In this example, we consider two simple cases and exam the RTM method at different noise level. The first one (see (a) in Figure \ref{f3}) is related to a planar interface $\Gamma=\Gamma_0$ and a circle given by
\begin{eqnarray*}
x(\theta)=(0.5\cos(\theta), -4+0.5\sin(\theta)),\qquad \theta\in[0,2\pi).
\end{eqnarray*}
The second case (see (d) in Figure \ref{f3}) is just related to a locally rough interface $\Gamma$ which is described by
\begin{eqnarray*}
f(t)=\Omega_3(2t+4)-0.6\Omega_3(2t-5)
\end{eqnarray*}
with $\Omega_3(\cdot)\in C_0^2(\mathbb R)$ being a cubic $B$-spline function given by
\ben
\Omega_3(t)
= \left\{\begin{aligned}
\frac{1}{2}|t|^3-t^2+\frac{2}{3}     &\qquad\qquad \textrm{for}\; |t|\leq1,\\
-\frac{1}{6}|t|^3+t^2-2|t|+\frac{4}{3} & \qquad\qquad \textrm{for}\;1<|t|<2\,,\\
 0 &\qquad\qquad\textrm{for}\; |t|\geq2.
\end{aligned}
\right.
\enn
The reconstruction results are presented in Figure \ref{f3}, which shows that the RTM method can provide a satisfied reconstruction for these two simple cases, even for $10\%$ noisy data.

{\bf Example 2.} In this example, we test the dependence of the RTM algorithm on the relative position and distance between the local 
perturbation $B$ and the embedded obstacle $D$. The locally rough interface $\Gamma$ and the rounded square shaped obstacle $D$ are parameterized by
\ben
&&f(t)=\left(0.6e^{-6(t+3)^2}+0.5e^{-7t^2}+0.5e^{-8(t-3)^2}\right)\cdot f_0(t)\quad t\in \R,\\
&&x(\theta)=(3+0.3(\cos^3(\theta)+\cos(\theta)),-6+0.3(\sin^3(\theta)+\sin(\theta))),\; \theta\in [0,2\pi),
\enn
where $f_0(t)\in C_0^{\infty}(\R)$ is a cut-off function defined by
\ben
f_0(t)
= \left\{\begin{aligned}
1 &\qquad\qquad \textrm{for}\; |t|\leq4,\\
\;\left(1+e^{\frac{1}{5-|t|}+\frac{1}{4-|t|}}\right)^{-1} & \qquad\qquad \textrm{for}\;4<|t|<5\,,\\
 0 &\qquad\qquad\textrm{for}\; |t|\geq5;
\end{aligned}
\right.
\enn
see (a) in Figure \ref{f4}. Next, we fix the locally rough interface $\Gamma$ and move the embedded obstacle $D$ up by 
four units and five units, see (b) and (c) in Figure \ref{f4}. The numerical results from $10\%$ noisy data are illustrated in 
(d), (e), (f) of Figure \ref{f4}. It is readily seen from Figure \ref{f4} that the quality of the reconstruction will depend on the 
relative position of $B$ and $D$. We guess that a possible reason is due to the strong multiple scattering between $B$
and $D$ when $D$ is close to $\Gamma$.

{\bf Example 3.} In the last example, as an attempt, we consider a piecewise continuous locally rough interface and exam
the RTM algorithm at different radiuses $R$. The locally rough interface is described by a piecewise constant defined by 
\begin{equation}\nonumber
  f(t)=\left\{\begin{array}{lll}
                         0.2 \qquad\;\; |t|\leq 1, \\[1mm]
                         0.3 \qquad\;\; 3\leq|t|\leq 4,\\[1mm]
                         0  \qquad\;\;{\rm others}.
                          \end{array}\right.
\end{equation}
The embedded obstacle is rounded triangle shaped given by 
\ben
x(\theta)=(-3+\left(0.5+0.1\cos(3\theta)\right)\cos(\theta),-6+\left(0.5+0.1\cos(3\theta)\right)\sin(\theta)),\; \theta\in [0,2\pi),
\enn
see (a) in Figure \ref{f5}. The radius is set to be $R=20$ and $R=100$, and the noise level is $10\%$. 
We present the reconstructions in (b), (c) of Figure \ref{f5}, which shows that the RTM algorithm is able to provide high reconstruction 
quality at these measurement radiuses.

It is observed from Figure \ref{f3}, Figure \ref{f4}, and Figure \ref{f5} that the RTM approach proposed in Theorem \ref{thm} can provide
accurate and stable reconstructions of the locally rough interface as well as the embedded obstacles for a variety of interfaces and obstacles. The image quality depends on the interaction of the locally rough interface and the embedded obstacle. Moreover, as shown in Figure \ref{f3}, Figure \ref{f4}, and Figure \ref{f5}, the RTM method is robust to noise.

\begin{figure}[htp]
\begin{center}
\subfigure[Physical configuration]{\includegraphics[width=0.31\textwidth]{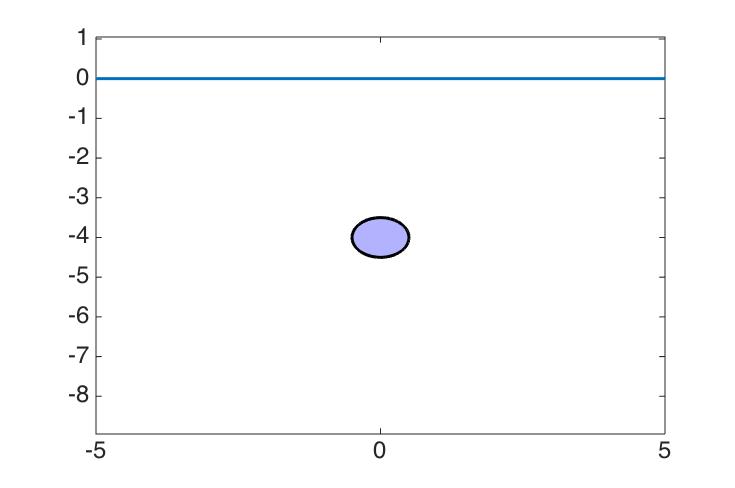}}
\subfigure[No noise]{\includegraphics[width=0.31\textwidth]{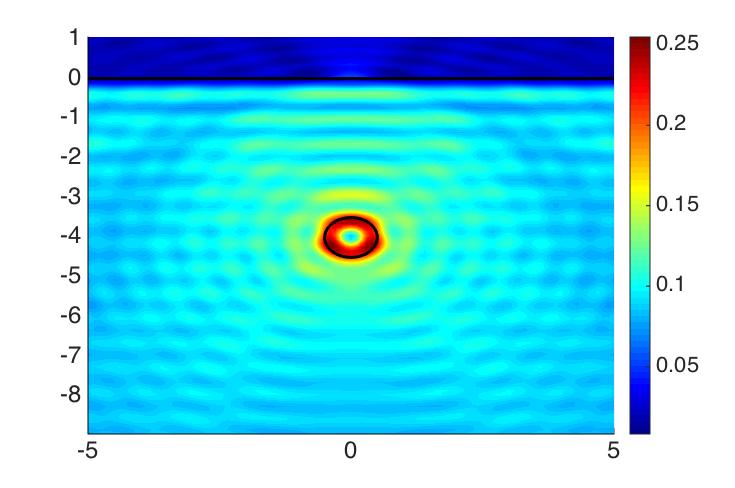}}
\subfigure[$10\%$ noise]{\includegraphics[width=0.31\textwidth]{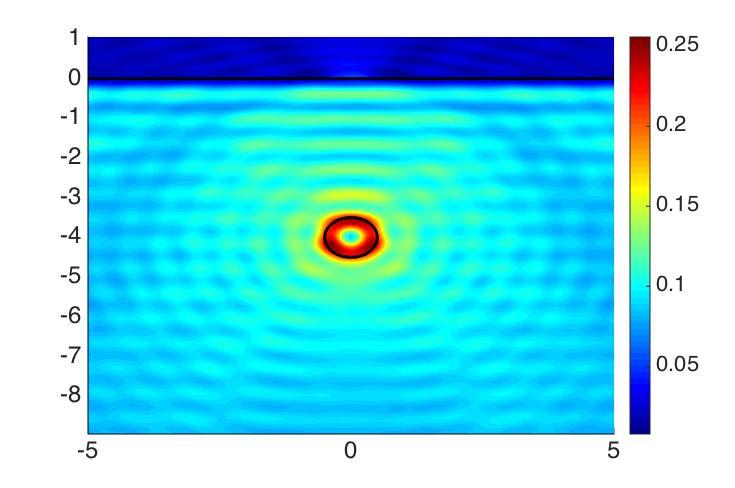}}
\subfigure[Physical configuration]{\includegraphics[width=0.31\textwidth]{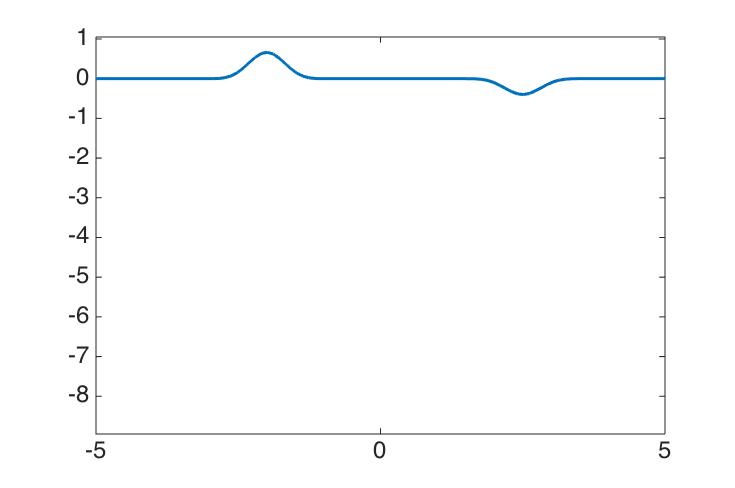}}
\subfigure[No noise]{\includegraphics[width=0.31\textwidth]{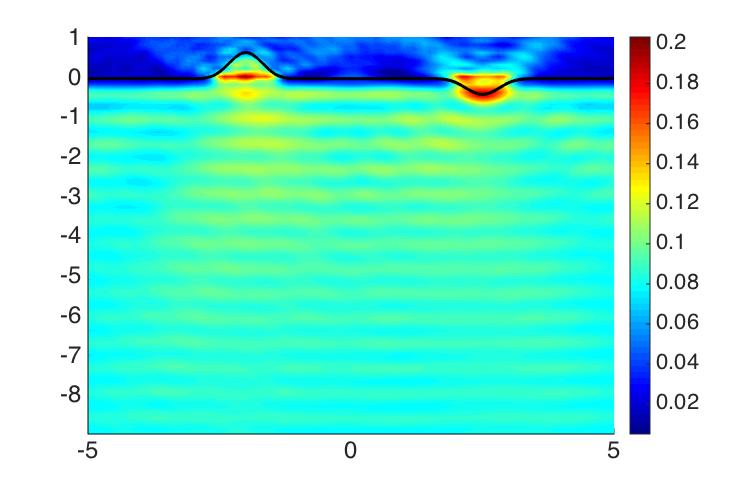}}
\subfigure[$10\%$ noise]{\includegraphics[width=0.31\textwidth]{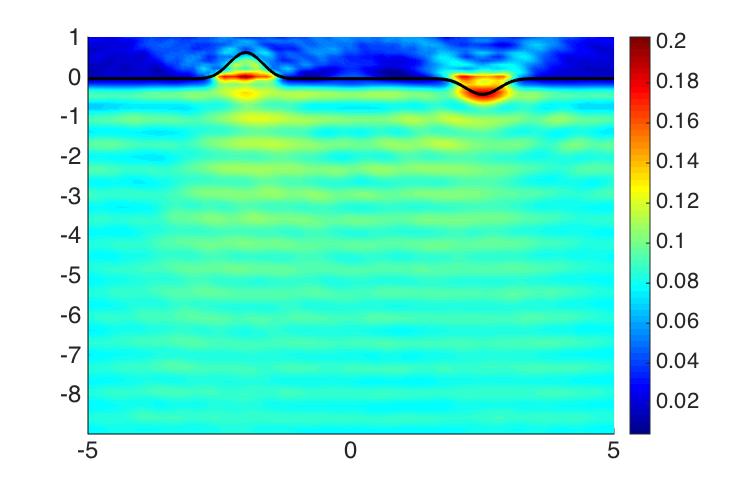}}
\caption{Reconstructions of the two simple cases in Example 1 from data at different noise level.}\label{f3} 
\end{center}
\end{figure}

\begin{figure}[htp]
\begin{center}
\subfigure[Physical configuration]{\includegraphics[width=0.31\textwidth]{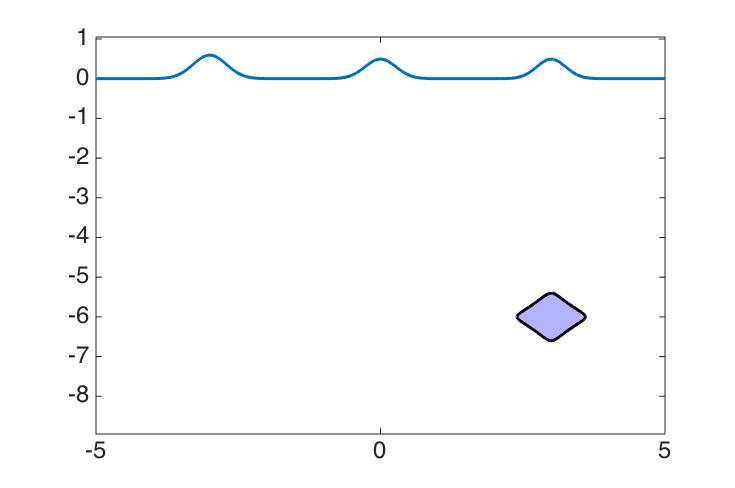}}
\subfigure[Physical configuration]{\includegraphics[width=0.31\textwidth]{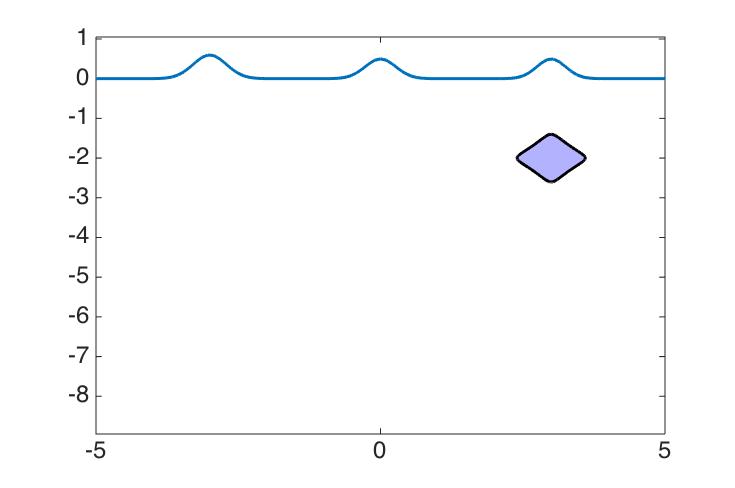}}
\subfigure[Physical configuration]{\includegraphics[width=0.31\textwidth]{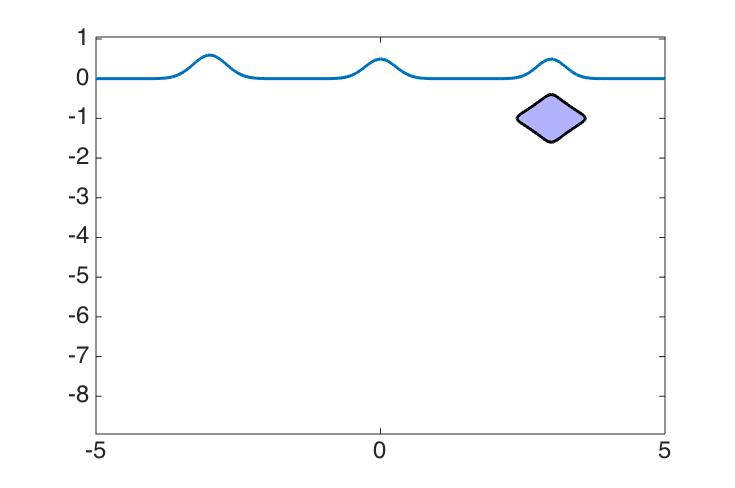}}
\subfigure[$10\%$ noise]{\includegraphics[width=0.31\textwidth]{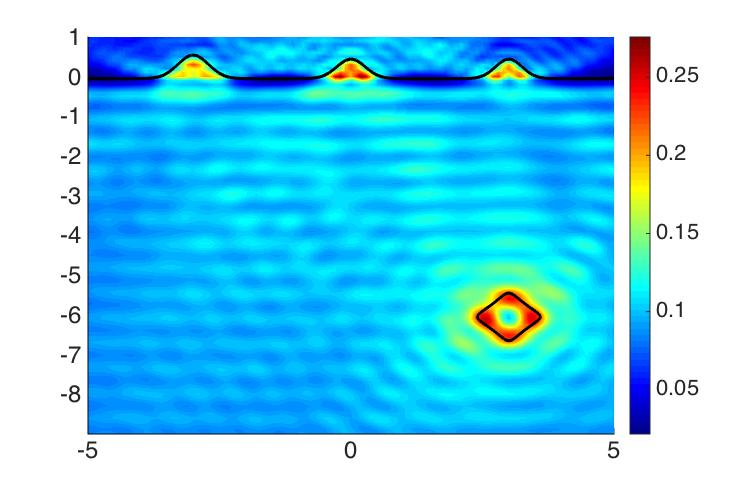}}
\subfigure[$10\%$ noise]{\includegraphics[width=0.31\textwidth]{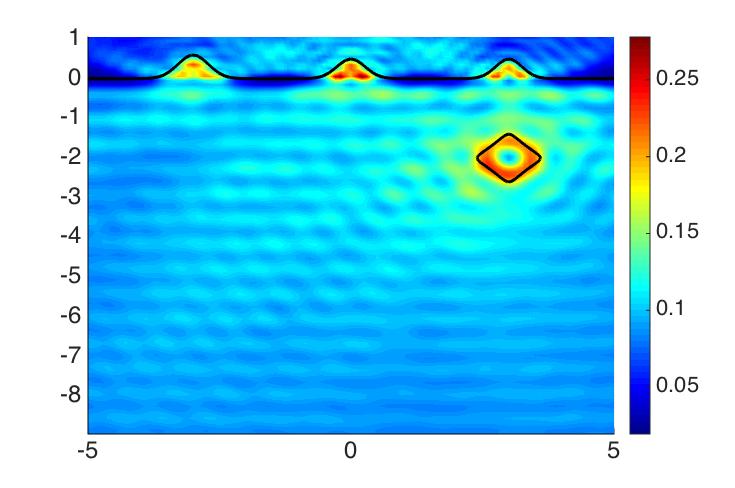}}
\subfigure[$10\%$ noise]{\includegraphics[width=0.31\textwidth]{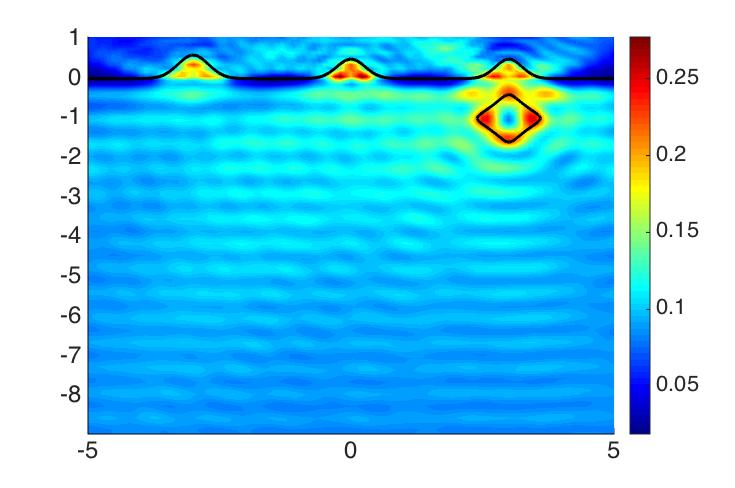}}
\caption{Reconstructions results for different relative position between the locally rough interface and the embedded obstacle.}\label{f4} 
\end{center}
\end{figure}

\begin{figure}[htp]
\begin{center}
\subfigure[Physical configuration]{\includegraphics[width=0.31\textwidth]{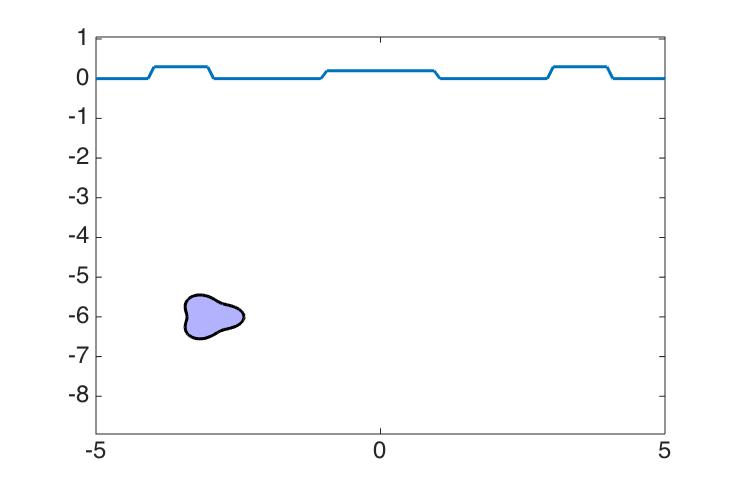}}
\subfigure[$R=20$, $10\%$ noise]{\includegraphics[width=0.31\textwidth]{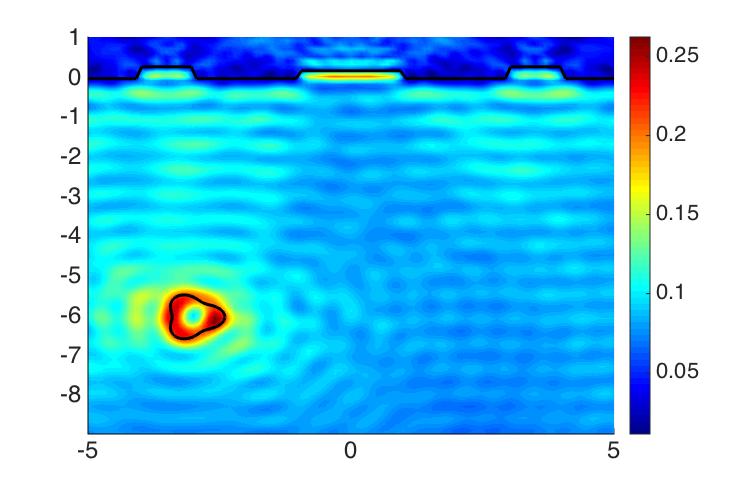}}
\subfigure[$R=100$, $10\%$ noise]{\includegraphics[width=0.31\textwidth]{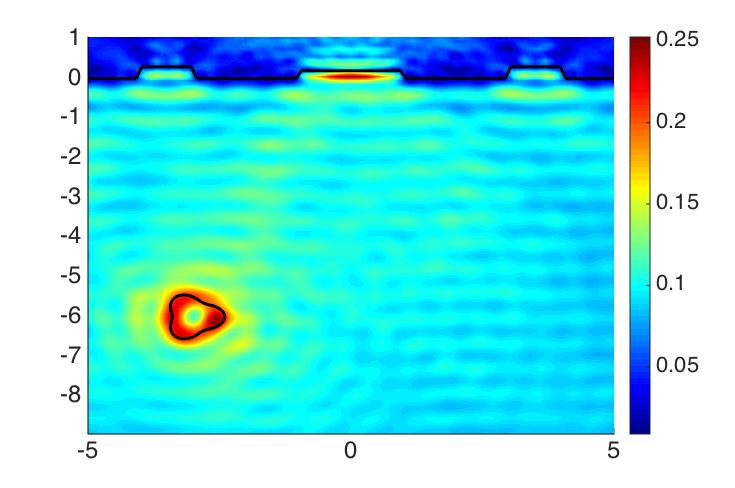}}
\caption{Reconstructions of the locally rough interface and the object given in Example 3 for different measurement radiuses.}\label{f5} 
\end{center}
\end{figure}

\section{Conclusion}
In this paper, we proposed an extended RTM method to simultaneously recover a locally rough interface and an embedded object 
in the lower half-space from near-field measurements. The main idea is based on constructing a modified Helmholtz-Kirchhoff identity
associated with the planar interface. Numerical experiments showed that the novel RTM algorithm can provide an accurate and 
stable reconstruction for a large number of locally rough interfaces and embedded obstacles, even for piecewise continuous interfaces.
Furthermore, the reconstructions can be regarded as a good initial guess for an iterative algorithm to obtain a more accurate result. 
As far as we know, this is the first RTM method to simultaneously recover an unbounded scatterer and a bounded scatterer. It is easily seen that the RTM method proposed in Theorem \ref{thm} depends crucially on the priori information that the interface is locally perturbed. However, if the rough surface is non-local, it is unclear how to establish the related Helmholtz-Kirchhoff identity and then 
extend this method to reconstruct the non-local rough surface. We hope to report the progress on this topic in the future.

{\bf Acknowledgments.} This work was supported by the NNSF of China grants No. 12171057, 12122114, and Education
Department of Hunan Province No. 21B0299.

\end{document}